\newcommand{\new}{} 
\newcommand{\mockalph}[1]{}
\newcommand{\Cov}{\mathbb{C}\mathrm{ov}}				
\newcommand{\sE}{\mathbb{E}}					
\newcommand{\Hfil}{\mathcal{H}}              
\newcommand{\distas}[1]{\mathbin{\overset{#1}{\kern\z@\sim}}}%
\newsavebox{\mybox}\newsavebox{\mysim}
\newcommand{\distras}[1]{%
  \savebox{\mybox}{\hbox{$\scriptstyle#1$}}%
  \savebox{\mysim}{\hbox{$\sim$}}%
  \mathbin{\overset{#1}{\kern\z@\resizebox{\wd\mybox}{\ht\mysim}{$\sim$}}}%
}
\newcommand{\iid}{\distras{\mathrm{i.i.d.}}}
\newcommand{\getsIid}{\overset{\mathrm{i.i.d.}}{\longleftarrow}}
\newcommand{\Prob}{\mathbb{P}}				
\renewcommand{\epsilon}{\varepsilon}
\renewcommand{\hat}{\widehat}
\newcommand{\R}{\mathbb{R}}					
\newcommand{\Nat}{\mathbb{N}}					
\newcommand{\1}{\mathbbm{1}}				
\newcommand{\fn}{(\cdot)}					  
\newcommand{\Laplace}[1]{\mathscr{L}\left\{ #1 \right\}}
\newcommand{\Linv}[1]{\mathscr{L}^{-1}\left\{ #1 \right\}}
\newcommand{\e}{\mathrm{e}}         
\newcommand{\Oh}{\mathcal{O}}        
\newcommand{\cInt}[2][]{\lambda^*_{#1}(#2)} 
\newcommand{\bInt}{\lambda}								
\newcommand{\exite}{\mu}									
\newcommand{\excite}{\exite}									
\newcommand{\lm}{\overline{\lambda^*}}
\newcommand{\Ns}{\boldsymbol{N}}
\newcommand{\bft}{\boldsymbol{t}}
\newcommand{\T}{\boldsymbol{T}}
\newcommand{\thetas}{\boldsymbol{\theta}}
\newcommand{\alg}[1]   {Algorithm~\ref{#1}}   
\newcommand{\eq}[1]{\eqref{#1}}
\newcommand{\dfn}[1]  {Definition~\ref{#1}}	
\newcommand{\thrm}[1] {Theorem~\ref{#1}}	
\newcommand{\fig}[1]  {Fig.\,\ref{#1}}		
\newcommand{\secn}[1] {Section~\ref{#1}}	
\newcommand{\ssec}[1] {Subsection~\ref{#1}}	
\begin{document}

\title{Hawkes Processes}

\titlerunning{Hawkes Processes}

\author{Patrick J.\ Laub        \and
        Thomas Taimre           \and
        Philip K.\ Pollett
}

\institute{P.\ J.\ Laub \at
           Department of Mathematics, The University of Queensland, Qld 4072, Australia, and \\
           Department of Mathematics, Aarhus University, Ny Munkegade, DK-8000 Aarhus C, Denmark. \\
           \email{p.laub@[uq.edu.au\textbar math.au.dk]}
           \and
           T.\ Taimre \at
           Department of Mathematics, The University of Queensland, Qld 4072, Australia\\
           \email{t.taimre@uq.edu.au}
           \and
           P.\ K.\ Pollett \at
           Department of Mathematics, The University of Queensland, Qld 4072, Australia\\
           \email{pkp@maths.uq.edu.au}
}



\date{Last edited: \today}

\maketitle

\begin{abstract}
Hawkes processes are a particularly interesting class of stochastic
process that have been applied in diverse areas, from earthquake
modelling to financial analysis. They are point processes whose
defining characteristic is that they `self-excite', meaning that each
arrival increases the rate of future arrivals for some period of time.
Hawkes processes are well established, particularly within the
financial literature, yet many of the treatments are
inaccessible to one not acquainted with the topic. This survey
provides background, introduces the field and historical
developments, and touches upon all major aspects of Hawkes processes.
\end{abstract}

\section{Introduction} \label{intro}

Events that are observed in time frequently cluster natually. An
earthquake typically increases the geological tension in the region
where it occurs, and aftershocks likely follow~\cite{ogata1988}.
A fight between rival gangs can ignite a spate of criminal
retaliations~\cite{mohler2011}. Selling a significant quantity of a
stock could precipitate a trading flurry or, on a larger scale, the
collapse of a Wall Street investment bank could send shockwaves
through the world's financial centres~\cite{azizpour2010}. \new

The \acfi{HP}\acused{HP} is a mathematical model for these
`self-exciting' processes, named after its creator Alan G.
Hawkes~\cite{hawkes1971spectra}. The \ac{HP} is a counting process
that models a sequence of `arrivals' of some type over time, for
example, earthquakes, gang violence, trade orders, or bank defaults.
Each arrival \emph{excites} the process in the sense that the chance
of a subsequent arrival is increased for some time period after the
initial arrival. As such, it is a non-Markovian extension of the
Poisson process. \new

Some datasets, such as the number of companies defaulting on loans
each year~\cite{lando2010}, suggest that the underlying process is
indeed self exciting. Furthermore, using the basic Poisson process to model
say the arrival of trade orders of a stock is highly inappropriate,
because participants in equity markets exhibit a herding behaviour, a
standard example of economic reflexivity \cite{filimonov2012}. \new

The process of generating, model fitting, and testing the goodness of
fit of \acp{HP} is examined in this survey. As the \ac{HP} literature
in financial fields is particularly well developed, applications in
these areas are considered chiefly here. \new
\section{Background} \label{background}

Before discussing \acp{HP}, some key concepts must be elucidated.
Firstly, we briefly give definitions for counting processes and point
processes, thereby setting essential notation. Secondly, we discuss
the lesser-known conditional intensity function and compensator, both
core concepts for a clear understanding of \ac{HP}s.

\subsection{Counting and point processes}

We begin with the definition of a counting process. \new

\begin{definition}[Counting process] \label{counting_process}
A \emph{counting process} is a stochastic process $(N(t) : t \geq 0)$
taking values in $\Nat_0$ that satisfies $N(0) =
0$, is \ac{a.s.} finite, and is a right-continuous step function with
increments of size $+1$. Further, denote by $(\Hfil(u) : u \geq 0)$
the \emph{history} of the arrivals up to time $u$.
(Strictly speaking $\Hfil(\cdot)$ is a filtration, that is, an increasing
sequence of $\sigma$-algebras.)
\end{definition}

A counting process can be viewed as a cumulative count of the number
of `arrivals' into a system up to the current time. Another way to
characterise such a process is to consider the sequence of random
arrival times $\T = \{T_1, T_2, \dots \}$
at which the counting process $N\fn$ has jumped. The process defined
as these arrival times is called a point process, described in
\dfn{point_process} (adapted from \cite{carstensen2010}); see
\fig{typical_point} for an example point process and its associated
counting process. \new

\begin{definition}[Point process] \label{point_process}
If a sequence of random variables $\T = \{T_1, T_2, \dots \}$, taking
values in $[0,\infty)$, has $\Prob(0 \leq T_1 \leq T_2 \leq \dots) = 1$,
and the number of points in a bounded region is \ac{a.s.} finite, then
$\T$ is a \emph{(simple) point process}.
\end{definition}

\begin{figure}
  \centering
  \includegraphics[width=0.8\textwidth]{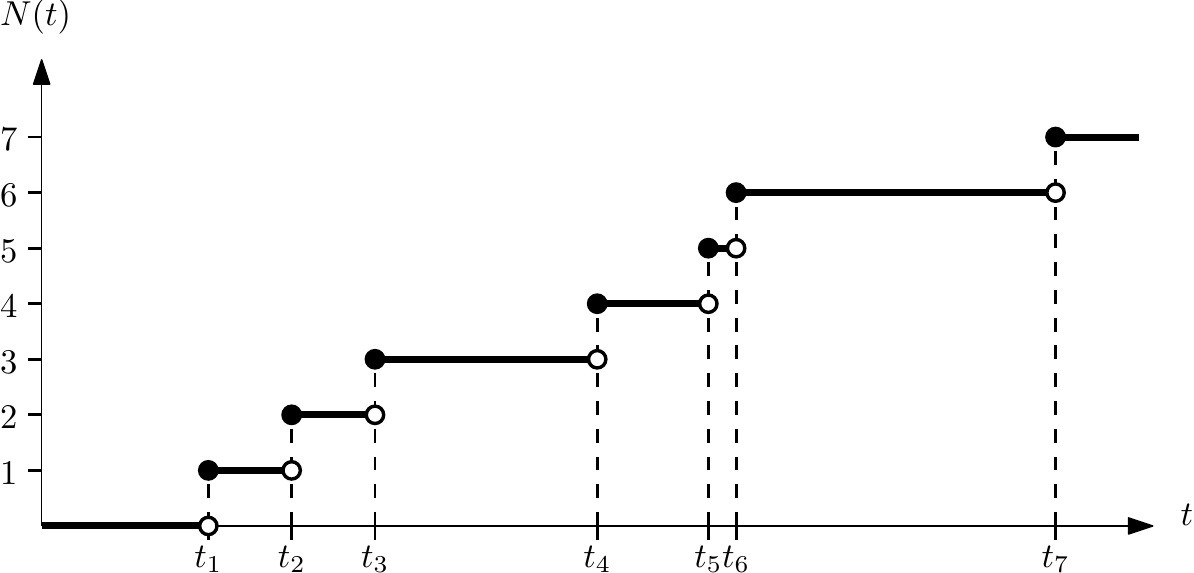}
  \caption[An example point process realisation]{An example point process realisation $\{t_1, t_2, \dots\}$ and corresponding counting process $N(t)$.}
  \label{typical_point}
\end{figure}

The counting and point process terminology is often interchangeable.
For example, if one refers to a Poisson process or a \ac{HP} then the
reader must infer from the context whether the counting process $N\fn$
or the point process of times $\T$ is being discussed. \new

One way to characterise a particular point process is to specify the
distribution function of the next arrival time conditional on the
past. Given the history up until the last arrival $u$, $\Hfil(u)$, define
(as per \cite{ozaki1979}) the conditional \ac{c.d.f.} (and
\ac{p.d.f.}) of the next arrival time $T_{k+1}$ as
\[
  F^*(t \,|\, \Hfil(u)) = \int_u^t \Prob(T_{k+1} \in [s, s + \dif s] \,|\, \Hfil(u)) \, \dif s
  = \int_u^t f^*(s \, | \, \Hfil(u)) \, \dif s \,.
\]
The joint \ac{p.d.f.} for a realisation $\{t_1, t_2, \dots, t_k\}$ is then, by the chain rule,
\vspace{-5pt}
\begin{equation} \label{joint_dens}
  f(t_1, t_2, \dots, t_k) = \prod_{i=1}^k f^*(t_i \,|\, \Hfil(t_{i-1}))\,.
\end{equation}

In the literature the notation rarely specifies $\Hfil(\cdot)$
explicitly, but rather a superscript asterisk is used (see for
example~\cite{daley2003a}). We follow this convention and abbreviate
$F^*(t \,|\, \Hfil(u))$ and $f^*(t \,|\, \Hfil(u))$ to $F^*(t)$ and
$f^*(t)$, respectively. \new

\begin{remark}
The function $f^*(t)$ can be used to classify certain classes of point
processes. For example, if a point process has an $f^*(t)$ which is
independent of $\Hfil(t)$ then the process is a \emph{renewal process}.  \new
\end{remark}

\subsection{Conditional intensity functions}

Often it is difficult to work with the conditional arrival
distribution $f^*(t)$. Instead, another characterisation of point
processes is used: the conditional intensity function. Indeed if the
conditional intensity function exists it uniquely characterises the
finite-dimensional distributions of the point process (see Proposition
7.2.IV of \cite{daley2003a}). Originally this function was called the
hazard function \cite{cox1955} and was defined as
\begin{equation} \label{cond_int_joint_dens}
  \cInt{t} = \frac{f^*(t)}{1-F^*(t)}\,.
\end{equation}
Although this definition is valid, we prefer an intuitive
representation of the conditional intensity function as the expected
rate of arrivals conditioned on $\Hfil(t)$: \new

\begin{definition}[Conditional intensity function] \label{cif}
  Consider a counting process $N\fn$ with associated histories $\Hfil\fn$. If a (non-negative) function $\cInt{t}$ exists such that
  \[ \cInt{t} = \lim_{h \downarrow 0} \frac{\sE[N(t+h)-N(t)\,|\, \Hfil(t)]}{h} \]
  which only relies on information of $N\fn$ in the past (that is, $\cInt{t}$ is $\Hfil(t)$-measurable), then it is called the \emph{conditional intensity function} of $N\fn$.
\end{definition}

The terms `self-exciting' and `self-regulating' can be made precise by
using the conditional intensity function. If an arrival causes the
conditional intensity function to increase then the process is said to
be \emph{self-exciting}. This behaviour causes temporal clustering of
$\T$. In this setting $\cInt{t}$ must be chosen to avoid
\emph{explosion}, where we use the standard definition of explosion as
the event that $N(t)-N(s)=\infty$ for $t-s < \infty$. See
\fig{typical_cond_int} for an example realisation of such a $\cInt{t}$.
\new

\begin{figure}
  \centering
  \includegraphics[width=0.8\textwidth]{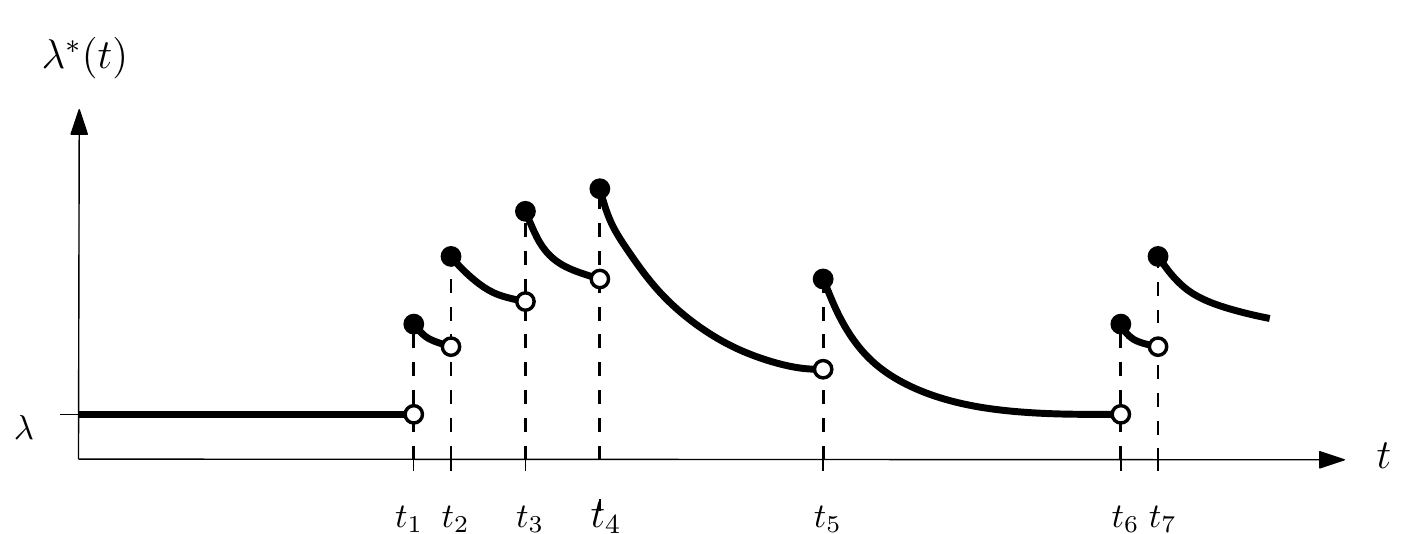}
  \caption[An example conditional intensity function for a self-exciting process]{An example conditional intensity function for a self-exciting process.}
  \label{typical_cond_int}
\end{figure}

Alternatively, if the conditional intensity function drops after an
arrival the process is called \emph{self-regulating} and the arrival
times appear quite temporally regular. Such processes are not examined
hereafter, though an illustrative example would be the arrival of
speeding tickets to a driver over time (assuming each arrival causes a
period of heightened caution when driving). \new

\subsection{Compensators}

Frequently the integrated conditional intensity function is needed
(for example, in parameter estimation and goodness of fit testing); it is
defined as follows. \new

\begin{definition}[Compensator] \label{compensator}
    For a counting process $N\fn$ the non-decreasing function
    \[ \Lambda(t) = \int_0^t \cInt{s} \,\dif s \]
    is called the \emph{compensator} of the counting process.
\end{definition}

In fact, a compensator is usually defined more generally and exists
even when $\cInt{\cdot}$ does not exist. Technically $\Lambda(t)$ is the
unique $\Hfil(t)$ predictable function, with $\Lambda(0)=0$, and is
non-decreasing, such that $N(t) = M(t) + \Lambda(t) $ almost surely
for $t \geq 0$ and where $M(t)$ is an $\Hfil(t)$ local martingale, whose
existence is guaranteed by the Doob--Meyer decomposition theorem.
However, for \acp{HP} $\cInt{\cdot}$ always exists (in fact, as we shall
see in \secn{lit_review}, a \ac{HP} is defined in terms of this
function) and therefore \dfn{compensator} is sufficient for our
purposes.

\section{Literature review} \label{lit_review}

With essential background and core concepts outlined in
Section~\ref{background}, we now turn to discussing \acp{HP},
including their useful immigration--birth representation. We briefly
touch on generalisations, before turning to a illustrative account of
\acp{HP} for financial applications. \new

\subsection{The Hawkes process}

Point processes gained a significant amount of attention in the field of statistics during the 1950s and 1960s. First, Cox \cite{cox1955} introduced the notion of a doubly stochastic Poisson process (now called the Cox process) and Bartlett \cite{bartlett1963spectral,bartlett1963density,bartlett1964} investigated statistical methods for point processes based on their power spectral densities. At IBM Research Laboratories, Lewis \cite{lewis1964} formulated a point process model (for computer failure patterns) which was a step in the direction of the \ac{HP}. The activity culminated in the significant monograph by Cox and Lewis \cite{cox1966} on time series analysis; modern researchers appreciate this text as an important development of point process theory since it canvassed their wide range of applications \cite[p.\ 16]{daley2003a}. \new

It was in this context that Hawkes \cite{hawkes1971spectra} set out to
bring Bartlett's spectral analysis approach to a new type of process:
a self-exciting point process. The process Hawkes described was a
one-dimensional point process (though originally specified for
$t\in\R$ as opposed to $t \in [0,\infty)$), and is defined as follows. \new

\begin{definition}[Hawkes process] \label{hawkes}
  Consider $(N(t) : t \geq 0)$ a counting process, with associated history $(\Hfil(t) : t \geq 0)$, that satisfies
  \vspace{-5pt}
  \[ \Prob(N(t+h)-N(t) = m \,|\, \Hfil(t)) = \begin{cases}
    \cInt{t}\, h + \mathrm{o}(h)\,,     & m=1 \\
    \mathrm{o}(h)\,,               & m>1 \\
    1 - \cInt{t}\, h + \mathrm{o}(h)\,, & m=0
  \end{cases}\,.
  \]
  Suppose the process' conditional intensity function is of the form
  \begin{equation} \label{cond_int}
    \cInt{t} = \bInt + \int_{0}^{t} \exite(t-u) \,\dif N(u)
  \end{equation}
  for some $\bInt > 0$ and $\exite:(0,\infty)\rightarrow[0,\infty)$
	  which are called the \emph{background intensity} and
	  \emph{excitation function} respectively. Assume that
	  $\exite(\cdot) \not= 0$ to avoid the trivial case, that is,
	  a homogeneous Poisson process. Such a process $N\fn$ is a \emph{\acl{HP}}.
\end{definition}

\begin{remark} \label{second_hp_def}
The definition above has $t$ as non-negative, however an alternative form of the \ac{HP} is to consider arrivals for $t \in \R$ and set $N(t)$ as the number of arrivals in $(0,t]$. Typically \ac{HP} results hold for both definitions, though we will specify that this second $t\in\R$ definition is to be used when it is required.
\end{remark}

\begin{figure}[h]
  \centering
  \sidesubfloat[]{\includegraphics[width=0.8\textwidth]{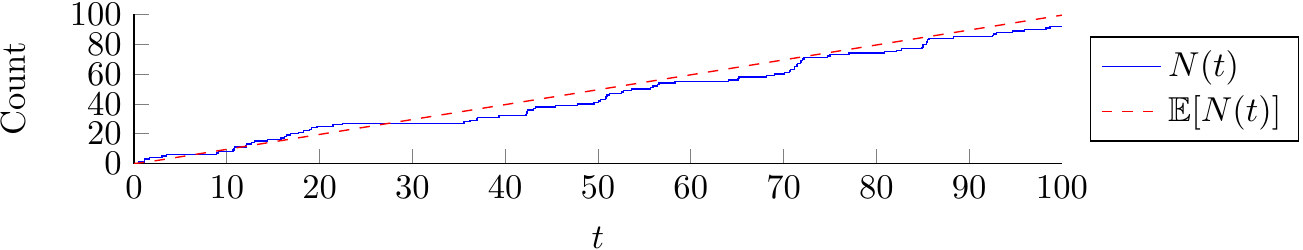}} \\
  \sidesubfloat[]{\includegraphics[width=0.8\textwidth]{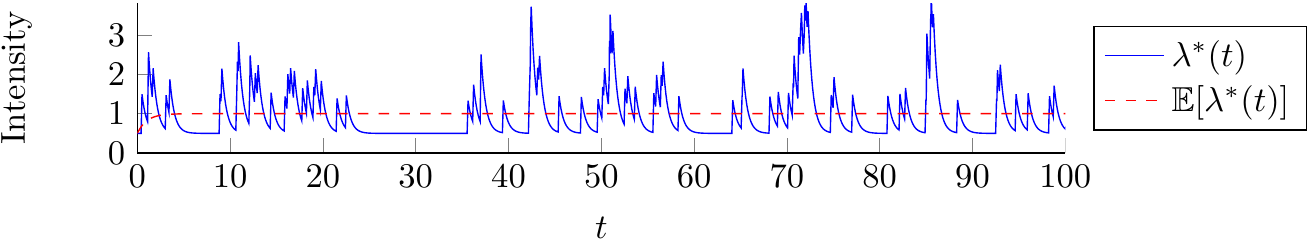}}
  \caption[A typical Hawkes process realisation $N(t)$ and associated $\cInt{t}$]{(a) A typical Hawkes process realisation $N(t)$, and its associated $\cInt{t}$ in (b), both plotted against their expected values.}
  \label{example_single_hawkes}
\end{figure}

\begin{remark}
In modern terminology, \dfn{hawkes} describes a \emph{linear} \ac{HP}---the \emph{nonlinear} version is given later in \dfn{nonlinear_hawkes}. Unless otherwise qualified, the \acp{HP} in this paper will refer to this linear form.
\end{remark}

A realisation of a \ac{HP} is shown in \fig{example_single_hawkes} with the associated path of the conditional intensity process. Hawkes \cite{hawkes1971point} soon extended this single point process into a collection of self- and mutually-exciting point processes, which we will turn to discussing after elaborating upon this one-dimensional process. \new

\subsection{Hawkes conditional intensity function}

The form of the Hawkes conditional intensity function in \eq{cond_int} is consistent with the literature though it somewhat obscures the intuition behind it. Using $\{t_1, t_2, \dots, t_k \}$ to denote the observed sequence of past arrival times of the point process up to time $t$, the Hawkes conditional intensity is
\[ \cInt{t} = \bInt + \sum_{t_i < t}~\exite(t - t_i) \,. \]
The structure of this $\cInt{\cdot}$ is quite flexible and only requires specification of the background intensity $\bInt > 0$ and the excitation function $\exite\fn$. A common choice for the excitation function is one of exponential decay; Hawkes \cite{hawkes1971spectra} originally used this form as it simplified his theoretical derivations \cite{hautsch2011}. In this case $\mu(t) = \alpha \, \e^{-\beta t}$, which is parameterised by constants $\alpha, \beta > 0$, and hence
\begin{equation} \label{exp_cond_int}
  \cInt{t} = \bInt + \int_{-\infty}^t \hspace{-5pt} \alpha \e^{-\beta(t-s)} \,\dif N(s)
        = \bInt + \sum_{t_i < t}~\alpha \e^{-\beta(t-t_i)}\,.
\end{equation}
The constants $\alpha$ and $\beta$ have the following interpretation: each arrival in the system instantaneously increases the arrival intensity by $\alpha$, then over time this arrival's influence decays at rate $\beta$. \new

Another frequent choice for $\exite\fn$ is a power law function, giving
\[
\cInt{t} = \bInt + \int_{-\infty}^{t} \frac{k}{(c+(t-s))^p} \,\dif N(s)
  = \bInt + \sum_{t_i < t}~ \frac{k}{(c+(t-t_i))^p}
\]
with some positive scalars $c, k,$ and $p$. The power law form was popularised by the geological model called Omori's law, used to predict the rate of aftershocks caused by an earthquake \cite{ogata1999}. More computationally efficient than either of these excitation functions is a piecewise linear function as in \cite{chatalbashev2007}. However, the remaining discussion will focus on the exponential form of the excitation function, sometimes referred to as the \ac{HP} with \emph{exponentially decaying intensity}. \new

One can consider the impact of setting an initial condition $\cInt{0} = \lambda_0$, perhaps in order to model a process from some time after it is started. In this scenario the conditional intensity process (using the exponential form of $\exite\fn$) satisfies the stochastic differential equation
\[ \dif\cInt{t} = \beta (\bInt - \cInt{t})\dif t + \alpha \dif N(t)\,, \quad t\geq0 \,. \]
Applying stochastic calculus yields the general solution of
\[ \cInt{t} = \e^{-\beta t} (\lambda_0 - \bInt) + \bInt + \int_0^t \alpha \e^{\beta(t-s)} \,\dif N(s)\,, \quad t\geq0 \,, \]
which is a natural extension of \eq{exp_cond_int} \cite{dafonseca2014}. \new

\subsection{Immigration--birth representation} \label{im_birth}

Stability properties of the \ac{HP} are often simpler to divine if it
is viewed as a branching process. Imagine counting the population in a
country where people arrive either via \emph{immigration} or by
\emph{birth}. Say that the stream of immigrants to the country form a
homogeneous Poisson process at rate $\bInt$. Each individual then produces
zero or more children independently of one another, and the arrival of
births form an inhomogeneous Poisson process. \new

An illustration of this interpretation can be seen in
\fig{immig_birth}. In branching theory terminology, this
\emph{immigration--birth representation} describes a Galton--Watson
process with a modified time dimension. Hawkes \cite{hawkes1974} used
the representation to derive asymptotic characteristics of the
process, such as the following result.

\begin{theorem}[Hawkes process asymptotic normality] \label{asymptotic}
  If
  \[ 0 < n := \int_0^\infty \exite(s) \, \dif s < 1 \text{ and }
  \int_0^\infty s \exite(s) \, \dif s < \infty \]
  then the number of \ac{HP} arrivals in $(0, t]$
  is asymptotically ($t \rightarrow \infty$) normally distributed.
  More precisely, writing $N(0,t] = N(t)-N(0)$,
  \[ \Prob\left( \frac{N(0,t] - \bInt t/(1-n)}{\sqrt{\bInt t/(1-n)^3}} \leq y \right)  \rightarrow \Phi(y) \,, \]
  where $\Phi\fn$ is the \ac{c.d.f.} of the standard normal distribution.
\end{theorem}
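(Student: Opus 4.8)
The plan is to exploit the immigration--birth (cluster) representation of \secn{im_birth}, under which $N(0,t]$ becomes, up to boundary corrections, a compound Poisson sum whose asymptotic normality is classical. First I would set up the cluster decomposition precisely. Immigrants arrive as a homogeneous Poisson process of rate $\bInt$, and each immigrant spawns an independent cluster: an individual born at time $s$ produces direct offspring according to an inhomogeneous Poisson process of intensity $\exite(\cdot - s)$, so its number of direct offspring is $\Poi(n)$ with $n = \int_0^\infty \exite(u)\,\dif u$. Since $0 < n < 1$, the associated Galton--Watson process is subcritical and every cluster is \ac{a.s.} finite.

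Second, I would compute the first two moments of the total cluster size $Z$ (the progenitor together with all of its descendants). Conditioning on the first generation gives $\sE[Z] = 1/(1-n)$, and the law of total variance, together with the fact that a Poisson offspring law has variance equal to its mean $n$, yields $\Var(Z) = n/(1-n)^3$; hence $\sE[Z^2] = n/(1-n)^3 + 1/(1-n)^2 = 1/(1-n)^3$. Writing $I(t)$ for the number of immigrants in $(0,t]$, one has $I(t) \sim \Poi(\bInt t)$, and, disregarding clusters that straddle the horizon $t$, $N(0,t] = \sum_{j=1}^{I(t)} Z_j$ with the $Z_j$ independent copies of $Z$ independent of $I(t)$. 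For this compound Poisson sum the standard identities give mean $\bInt t\,\sE[Z] = \bInt t/(1-n)$ and variance $\bInt t\,\sE[Z^2] = \bInt t/(1-n)^3$, which match exactly the centring and scaling in the statement. Because $\sE[Z^2] < \infty$, the classical central limit theorem for compound Poisson distributions---equivalently, a characteristic-function expansion of $\exp\{\bInt t(\phi_Z(u) - 1)\}$ about $u=0$---shows that the standardised sum converges in distribution to $\Phi$ as $t \rightarrow \infty$.

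The main obstacle, and the only place the second hypothesis enters, is making the preceding step rigorous by controlling the \emph{boundary effect}: a cluster whose progenitor arrives shortly before $t$ contributes only partially to $N(0,t]$, while a cluster counted in full may place points after $t$. The condition $\int_0^\infty s\,\exite(s)\,\dif s < \infty$ bounds the expected parent-to-child delay and, in combination with subcriticality, forces the expected total temporal spread of a cluster to be finite. A Fubini argument then bounds the expected boundary discrepancy between $N(0,t]$ and the idealised compound Poisson sum by $\bInt$ times this finite spread, uniformly in $t$; this error is therefore $\Oh_{\Prob}(1) = \mathrm{o}_{\Prob}(\sqrt{t})$ and washes out after dividing by $\sqrt{\bInt t/(1-n)^3}$. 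Combining the negligibility of the boundary term with the compound Poisson limit via Slutsky's theorem gives the claim. I expect this boundary estimate to be the delicate part of the argument, since without the finite-mean-delay hypothesis the clusters could spread out enough that a non-vanishing fraction of points near the horizon escapes the count, breaking both the variance identity and the Gaussian limit.
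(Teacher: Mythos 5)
Your proposal is correct, and it is essentially the argument the paper itself points to: the paper gives no proof of \thrm{asymptotic}, stating only that Hawkes \cite{hawkes1974} derived it from the immigration--birth representation of \secn{im_birth}, which is exactly the cluster/compound-Poisson route you follow. Your key computations check out --- the total-progeny moments $\sE[Z]=1/(1-n)$ and $\sE[Z^2]=\Var(Z)+(\sE[Z])^2 = n/(1-n)^3 + 1/(1-n)^2 = 1/(1-n)^3$ for $\Poi(n)$ offspring reproduce the stated centring $\bInt t/(1-n)$ and scaling $\sqrt{\bInt t/(1-n)^3}$, and the hypothesis $\int_0^\infty s\,\exite(s)\,\dif s<\infty$ enters, as you say, only to make the expected boundary discrepancy (which a Fubini argument bounds by $\bInt$ times the expected sum of within-cluster birth delays, a finite quantity since $\sum_{k\geq 1}k n^k<\infty$) uniformly bounded in $t$, so that Slutsky's theorem completes the proof.
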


\begin{remark}
 More modern work uses the immigration--birth representation for
 applying Bayesian techniques; see, for example, \cite{rasmussen2013}. \new
\end{remark}

\begin{figure}
  \centering
  \includegraphics[width=0.8\textwidth]{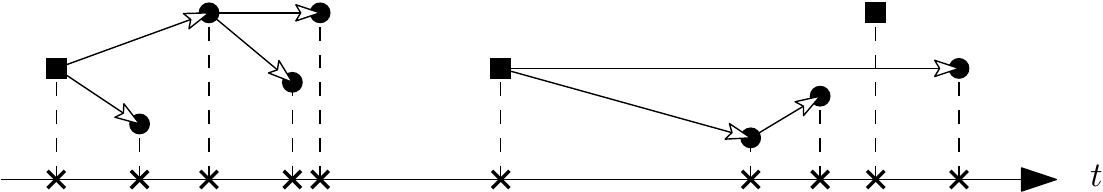}
  \caption[Example Hawkes process immigration--birth representation]{Hawkes process represented as a collection of family trees (immigration--birth representation). Squares ($\blacksquare$) indicate immigrants, circles (\CIRCLE ) are offspring/descendants, and the crosses ($\bm{\times}$) denote the generated point process.}
  \label{immig_birth}
\end{figure}

For an individual who enters the system at time $t_i \in \R$, the rate
at which they produce offspring at future times $t > t_i$ is
$\exite(t-t_i)$. Say that the direct offspring of this individual
comprise
the \emph{first-generation}, and their offspring comprise the
\emph{second-generation}, and so on; members of the union of all these
generations are called the \emph{descendants} of this $t_i$ arrival.
\new

Using the notation from \cite[Section 5.4]{grimmett2001}, define $Z_i$
to be the random number of offspring in the $i$th generation (with
$Z_0 = 1$). As the first-generation offspring arrived from a Poisson
process $Z_1 \sim \mathrm{Poi}(n)$ where the mean $n$ is known as the
\emph{branching ratio}. This branching ratio (which can take values in
$(0, \infty)$) is defined in \thrm{asymptotic} and in the case of an
exponentially decaying intensity is
\begin{equation} \label{exp_n}
    n = \int_0^\infty \alpha \e^{-\beta s}\, \dif s = \frac{\alpha}{\beta}\,.
\end{equation}

Knowledge of the branching ratio can inform development of simulation algorithms. For each immigrant $i$, the times of the first-generation offspring arrivals---conditioned on knowing the total number of them $Z_1$---are each \ac{i.i.d.} with density $\exite(t-t_i)/n$. \secn{simulation} explores \ac{HP} simulation methods inspired by the immigration--birth representation in more detail. \new

The value of $n$ also determines whether or not the \ac{HP} explodes. To see this, let $g(t) = \sE[\cInt{t}]$. A renewal-type equation will be constructed for $g$ and then its limiting value will be determined. Conditioning on the time of the first jump,
\[
    g(t) = \sE \left[ \cInt{t} \right]
         = \sE \left[ \bInt + \int_0^t \exite(t-s) \dif N(s) \right]
         = \bInt + \int_0^t \exite(t-s) \, \sE[ \dif N(s) ] \,.
\]
In order to calculate this expected value, start with
\[
  \cInt{s} = \lim_{h \downarrow 0} \frac{\sE[N(s+h)-N(s)\,|\, \Hfil(s)]}{h}
  = \frac{\sE[\dif N(s)\,|\, \Hfil(s)]}{\dif s}
\]
and take expectations (and apply the tower property)
\[
  g(s) = \sE[\cInt{s}]
  = \frac{\sE[\sE[\dif N(s)\,|\, \Hfil(s)]]}{\dif s}
  = \frac{\sE[\dif N(s)]}{\dif s}
\]
to see that
\[ \sE[\dif N(s)] = g(s) \dif s \,. \]
Therefore
\[
  g(t) = \bInt + \int_0^t \exite(t-s) \, g(s) \dif s
  = \bInt + \int_0^t g(t-s) \, \exite(s) \dif s \,.
\]
This renewal--type equation (in convolution notation is $g = \bInt + g \star \exite$) then has different solutions according to the value of $n$. Asmussen \cite{asmussen2003} splits the cases into: the \emph{defective} case ($n < 1$), the \emph{proper} case ($n=1$), and the \emph{excessive} case ($n > 1$). Asmussen's Proposition 7.4 states that for the defective case
\begin{equation} \label{mean_intensity}
    g(t) = \sE[\cInt{t}] \rightarrow \frac{\lambda}{1-n}\,,\quad\text{as }t\rightarrow \infty \,.
\end{equation}
However in the excessive case, $\cInt{t} \rightarrow \infty$
exponentially quickly, and hence $N\fn$ eventually explodes \ac{a.s.} \new

Explosion for $n>1$ is supported by viewing the arrivals as a
branching process. Since $\sE[Z_i] = n^i$ (see Section 5.4 Lemma 2 of
\cite{grimmett2001}), the expected number of descendants for one
individual is
\[ \sE\left[\sum_{i=1}^\infty Z_i \right] = \sum_{i=1}^\infty \sE[Z_i] = \sum_{i=1}^\infty n^i = \begin{cases}
  \frac{n}{1-n}, & n < 1 \\
  \infty, & n \geq 1
\end{cases}\,. \]
Therefore $n \geq 1$ means that one immigrant would generate
infinitely many descendants on average. \new

When $n \in (0,1)$ the branching ratio can be interpreted as a
probability. It is the ratio of the number of descendants for one
immigrant, to the size of their entire family (all descendants
plus the original immigrant); that is
\[ \frac{\sE\left[\sum_{i=1}^\infty Z_i \right]}{1+\sE\left[\sum_{i=1}^\infty Z_i \right]} = \frac{\frac{n}{1-n}}{1+\frac{n}{1-n}} = \frac{\frac{n}{1-n}}{\frac{1}{1-n}} = n \,. \]
Therefore, any \ac{HP} arrival selected at random was generated
\emph{endogenously} (a child) \ac{w.p.}~$n$ or
\emph{exogenously} (an immigrant) \ac{w.p.} $1-n$. Most
properties of the \ac{HP} rely on the process being \emph{stationary},
which is another way to insist that $n \in (0, 1)$ (a rigorous
definition is given in \secn{spectral_analysis}), so this is assumed
hereinafter. \new

\subsection{Covariance and power spectral densities} \label{spectral_analysis}

\acp{HP} originated from the spectral analysis of general stationary
point processes. The \ac{HP} is stationary for finite values of $t$ when it is defined as per Remark \ref{second_hp_def}, so we will use this definition for the remainder of \ssec{spectral_analysis}. Finding the power spectral density of the \ac{HP}
gives access to many techniques from the spectral analysis field; for
example, model fitting can be achieved by using the observed
periodogram of a realisation. The power spectral density is defined in
terms of the covariance density. Once again the exposition is
simplified by using the shorthand that
\[ \dif N(t) = \lim_{h \downarrow 0} N(t+h)-N(t) \,.\]

Unfortunately the term `stationary' has many different meanings in
probability theory. In this context the \ac{HP} is stationary when the
jump process $(\dif N(t) : t \geq 0)$---which takes values in $\{0,
1\}$---is \emph{weakly stationary}. This means that $\sE[\dif N(t)]$
and $\Cov(\dif N(t), \dif N(t+s))$ do not depend on $t$. Stationarity
in this sense does not imply stationarity of $N\fn$ or stationarity of
the inter-arrival times \cite{lewis1970}. One consequence of
stationarity is that $\cInt{\cdot}$ will have a long term mean (as given
by \eq{mean_intensity})
\begin{equation} \label{lm}
  \lm := \sE[\cInt{t}] = \frac{\sE[\dif N(t)]}{\dif t} = \frac{\lambda}{1-n} \,.
\end{equation}

The \emph{(auto)covariance density} is defined, for $\tau > 0$, to be
\[ R(\tau) = \Cov\left(\frac{\dif N(t)}{\dif t}, \frac{\dif N(t+\tau)}{\dif \tau}\right) \,. \]
Due to the symmetry of covariance, $R(-\tau) = R(\tau)$,
however $R\fn$ cannot be extended to the whole of $\R$ because there
is an atom at~$0$. For simple point processes $\sE[(\dif N(t))^2] = \sE[\dif
N(t)]$ (since $\dif N(t) \in \{0, 1\}$) therefore for $\tau=0$
\[ \sE[(\dif N(t))^2] = \sE[\dif N(t)] = \lm \dif t \,. \]
The \emph{complete covariance density} (complete in that its domain is all of $\R$) is defined as
\begin{equation} \label{complete}
    R^{(c)}(\tau) = \lm \delta(\tau) + R(\tau)
\end{equation}
where $\delta\fn$ is the Dirac delta function.
\begin{remark}
Typically $R(0)$ is defined such that $R^{(c)}\fn$ is everywhere continuous. Lewis \cite[p.\ 357]{lewis1970} states that strictly speaking $R^{(c)}\fn$ ``does not have a `value' at $\tau=0$''. See \cite{bartlett1963density,cox1966}, and \cite{hawkes1971spectra} for further details.
\end{remark}
The corresponding \emph{power spectral density function} is then
\begin{equation} \label{spectral_def}
  S(\omega) := \frac{1}{2\pi} \int_{-\infty}^{\infty} \e^{-i\tau \omega} R^{(c)}(\tau) \dif \tau
  = \frac{1}{2\pi} \left[ \lm + \int_{-\infty}^{\infty} \e^{-i\tau \omega} R(\tau) \dif \tau \right] \,.
\end{equation}
Up to now the discussion (excluding the final value of \eq{lm}) has
considered general stationary point processes. To apply the theory
specifically to \acp{HP} we need the following result. \new

\begin{theorem}[Hawkes process power spectral density] \label{spectral}
  Consider a \ac{HP} with an exponentially decaying intensity with $\alpha < \beta$. The intensity process then has covariance density, for $\tau > 0$,
  \[ R(\tau) = \frac{\alpha \beta \bInt (2\beta - \alpha)}{2(\beta-\alpha)^2} \e^{-(\beta-\alpha)\tau} \,. \]
  Hence, its power spectral density is, $\forall \, \omega \in \R$,
  \[ S(\omega) = \frac{\bInt \beta}{2\pi(\beta-\alpha)} \left( 1 + \frac{\alpha(2\beta - \alpha)}{(\beta-\alpha)^2 + \omega^2} \right) \,. \]
\end{theorem}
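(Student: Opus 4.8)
\section*{Proof proposal}

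The plan is to first derive a closed integral equation for the covariance density $R(\tau)$, solve it explicitly for the exponential kernel $\exite(t)=\alpha\e^{-\beta t}$ via an exponential ansatz, and then obtain $S(\omega)$ by Fourier-transforming the resulting two-sided exponential. Throughout I work with the stationary $t\in\R$ version (Remark~\ref{second_hp_def}), so that $\sE[\dif N(t)]=\lm\,\dif t$ with $\lm=\bInt/(1-n)$ and $n=\alpha/\beta<1$, the latter guaranteed by the hypothesis $\alpha<\beta$.

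First I would set up the key integral equation. For $\tau>0$, conditioning the later increment on $\Hfil(t+\tau)$ and using $\sE[\dif N(t+\tau)\mid\Hfil(t+\tau)]=\cInt{t+\tau}\,\dif t$ gives $\sE[\dif N(t)\,\dif N(t+\tau)]=\sE[\cInt{t+\tau}\,\dif N(t)]\,\dif\tau$. Substituting $\cInt{t+\tau}=\bInt+\int_{-\infty}^{t+\tau}\exite(t+\tau-s)\,\dif N(s)$ and taking expectations produces a constant term $\bInt\lm$ together with an integral of $\sE[\dif N(s)\,\dif N(t)]$ against $\exite$. The delicate point is the diagonal $s=t$: there the complete covariance density carries a Dirac atom, and for a simple point process $\sE[(\dif N(t))^2]=\sE[\dif N(t)]=\lm\,\dif t$, contributing a term $\lm\,\exite(\tau)$; the off-diagonal part contributes $\int_{-\infty}^{\tau}\exite(\tau-u)\,m(u)\,\dif u$, where $m$ is the atom-free cross moment. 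Writing $m(u)=R(|u|)+\lm^2$ and using $\int_0^\infty\exite=n$, all mean-level terms collapse through the stationarity identity $\lm(1-n)=\bInt$, leaving the Wiener--Hopf equation, for $\tau>0$,
\[ R(\tau) = \lm\,\exite(\tau) + \int_0^\infty\exite(\tau+w)\,R(w)\,\dif w + \int_0^\tau\exite(\tau-u)\,R(u)\,\dif u\,, \]
after folding the negative-$u$ range via the symmetry $R(-u)=R(u)$.

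Next I would solve this for $\exite(t)=\alpha\e^{-\beta t}$ using the ansatz $R(\tau)=C\,\e^{-\gamma\tau}$. Carrying out the three elementary integrals, each term becomes a linear combination of $\e^{-\gamma\tau}$ and $\e^{-\beta\tau}$. Matching the $\e^{-\gamma\tau}$ coefficients forces $\beta-\gamma=\alpha$, i.e.\ $\gamma=\beta-\alpha$ (positive precisely because $\alpha<\beta$), and matching the $\e^{-\beta\tau}$ coefficients yields a single linear equation for $C$; inserting $\lm=\bInt\beta/(\beta-\alpha)$ then gives $C=\alpha\beta\bInt(2\beta-\alpha)/[2(\beta-\alpha)^2]$, the claimed covariance density. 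Finally, extending $R$ evenly to $\R$ as $C\,\e^{-(\beta-\alpha)|\tau|}$ and inserting into \eqref{spectral_def}, the spectral density reduces to the transform of a symmetric exponential, $\int_{-\infty}^\infty\e^{-i\tau\omega}\e^{-a|\tau|}\,\dif\tau=2a/(a^2+\omega^2)$ with $a=\beta-\alpha$; collecting the prefactors and pulling out $\bInt\beta/(\beta-\alpha)$ produces the stated $S(\omega)$.

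The main obstacle is the derivation of the integral equation rather than its solution. One must treat the atom of the complete covariance density at the origin correctly---retaining the self-pair contribution $\lm\,\exite(\tau)$ while excluding $u=0$ from the off-diagonal integral---and then verify that the mean-level terms cancel through $\lm(1-n)=\bInt$. Once that bookkeeping is settled, the exponential ansatz and the Fourier transform are routine.
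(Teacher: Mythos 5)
Your proposal is correct, and its first half coincides with the paper's own argument: the derivation of the Wiener--Hopf-type equation $R(\tau) = \lm\exite(\tau) + \int_0^\infty \exite(\tau+v)R(v)\,\dif v + \int_0^\tau \exite(\tau-v)R(v)\,\dif v$ via the tower property, the careful handling of the diagonal atom through the complete covariance density $R^{(c)}(v)=\lm\delta(v)+R(v)$, and the cancellation of mean-level terms through $\bInt-(1-n)\lm=0$ is exactly what the paper does in Appendix~\ref{part_i}. Where you genuinely diverge is in solving that equation. The paper (Appendix~\ref{part_ii}) takes Laplace transforms of all three terms to obtain $\Laplace{R(\tau)}(s) = \frac{\alpha}{s+\beta}\left(\lm + \Laplace{R(\tau)}(\beta) + \Laplace{R(\tau)}(s)\right)$, pins down the unknown constant $\Laplace{R(\tau)}(\beta)$ by the self-consistent substitution $s=\beta$, solves for $\Laplace{R(\tau)}(s)$, inverts, and then obtains $S(\omega)$ by evaluating the transform at $s=\pm i\omega$. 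You instead posit the ansatz $R(\tau)=C\e^{-\gamma\tau}$, match coefficients of $\e^{-\gamma\tau}$ (forcing $\gamma=\beta-\alpha$) and of $\e^{-\beta\tau}$ (fixing $C$), and then Fourier-transform the two-sided exponential directly using $\int_{-\infty}^{\infty}\e^{-i\tau\omega}\e^{-a\abs{\tau}}\,\dif\tau = 2a/(a^2+\omega^2)$; both computations I have checked and they reproduce the stated $R(\tau)$ and $S(\omega)$. Your route is more elementary, and the final Fourier step is arguably cleaner than evaluating Laplace transforms at imaginary arguments. What the paper's transform method buys is that it never requires guessing the functional form and it delivers uniqueness essentially for free (injectivity of the Laplace transform on the relevant class), whereas your ansatz strictly establishes only that $C\e^{-(\beta-\alpha)\tau}$ \emph{is a} solution; to fully close your argument you should add one line noting that the solution of this renewal-type equation in the class of integrable (or Laplace-transformable) functions is unique, so the ansatz solution is \emph{the} covariance density.
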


\begin{proof} (Adapted from \cite{hawkes1971spectra}.) Consider the covariance density for $\tau \in \R \setminus \{0\}$:
\begin{equation} \label{cov}
    R(\tau) = \sE\left[\frac{\dif N(t)}{\dif t}\frac{\dif N(t+\tau)}{\dif \tau}\right] - \lm^2 \,.
\end{equation}
Firstly note that, via the tower property,
\begin{align*}
  \sE\left[\frac{\dif N(t)}{\dif t}\frac{\dif N(t+\tau)}{\dif \tau} \right]
  &= \sE \left[ \sE\left[ \frac{\dif N(t)}{\dif t}\frac{\dif N(t+\tau)}{\dif \tau} \,\Big|\,\Hfil(t+\tau) \right]  \right] \\
  &= \sE \left[ \frac{\dif N(t)}{\dif t} \sE\left[ \frac{\dif N(t+\tau)}{\dif \tau} \,\Big|\,\Hfil(t+\tau) \right]  \right] \\
  &= \sE \left[ \frac{\dif N(t)}{\dif t} \cInt{t+\tau} \right]\,.
\end{align*}
Hence \eq{cov} can be combined with \eq{cond_int} to see that $R(\tau)$ equals
\[
  \sE\left[ \frac{\dif N(t)}{\dif t} \left( \bInt + \int_{-\infty}^{t+\tau} \mu(t+\tau-s) \dif N(s) \right)\right] - \lm^2 ,
\]
which yields
\begin{align} \label{to_transform}
  R(\tau) &= \lm \exite(\tau) + \int_{-\infty}^{\tau} \exite(\tau-v) R(v) \dif v \notag \\
  &= \lm \exite(\tau) + \int_0^{\infty} \exite(\tau + v) R(v) \dif v + \int_0^{\tau} \exite(\tau - v) R(v) \dif v \,.
\end{align}
Refer to Appendix \ref{part_i} for details; this is a Wiener--Hopf-type
integral equation. Taking the Laplace transform of \eq{to_transform} gives
\[
  \Laplace{R(\tau)}(s) = \frac{\alpha \lm ( 2\beta - \alpha)}{2(\beta - \alpha)(s + \beta - \alpha)} \,.
\]
Refer to Appendix \ref{part_ii} for details.
Note that \eq{exp_n} and \eq{lm} supply
$\lm = {\beta\lambda}/{(\beta-\alpha)}$, which implies that
\[
\Laplace{R(\tau)}(s) = \frac{\alpha \beta\lambda( 2\beta - \alpha)}{2(\beta
- \alpha)^2(s + \beta - \alpha)}\,.
\]
Therefore,
\[
R(\tau) = \Linv{\frac{\alpha \beta\lambda( 2\beta - \alpha)}{2(\beta - \alpha)^2(s + \beta - \alpha)}}
    = \frac{\alpha \beta \bInt (2\beta - \alpha)}{2(\beta-\alpha)^2} \e^{-(\beta-\alpha)\tau} \,.
\]
The values of $\lm$ and $\Laplace{R(\tau)}(s)$ are then substituted into the definition given in \eq{spectral_def}:
\begin{align*}
  S(\omega) &= \frac{1}{2\pi} \left[ \lm + \int_{-\infty}^{\infty} \e^{-i\tau \omega} R(\tau) \dif \tau \right] \\
    &= \frac{1}{2\pi} \left[ \lm + \int_{0}^{\infty} \e^{-i\tau \omega} R(\tau) \dif \tau + \int_{0}^{\infty} \e^{i\tau \omega} R(\tau) \dif \tau \right] \\
    &= \frac{1}{2\pi} \left[ \lm + \Laplace{R(\tau)}(i\omega) + \Laplace{R(\tau)}(-i\omega) \right] \\
    &= \frac{1}{2\pi} \left[ \lm + \frac{\alpha \lm ( 2\beta - \alpha)}{2(\beta - \alpha)(i\omega + \beta - \alpha)} + \frac{\alpha \lm ( 2\beta - \alpha)}{2(\beta - \alpha)(-i\omega + \beta - \alpha)} \right] \\
    &= \frac{\bInt \beta}{2\pi(\beta - \alpha)} \left[ 1 + \frac{\alpha ( 2\beta - \alpha)}{(\beta-\alpha)^2 + \omega^2} \right] \,.
\end{align*}
\end{proof}
\begin{remark}
The power spectral density appearing in Theorem~\ref{spectral} is a shifted scaled Cauchy \ac{p.d.f.}
\end{remark}

\begin{remark}
As $R\fn$ is a real-valued symmetric function, its Fourier transform
$S\fn$ is also real-valued and symmetric, that is,
\[
  S(\omega) = \frac{1}{2\pi} \left[ \lm + \int_{-\infty}^{\infty} \e^{-i\tau \omega} R(\tau) \dif \tau \right]
  = \frac{1}{2\pi} \left[ \lm + \int_{-\infty}^{\infty} \cos(\tau \omega) R(\tau) \dif \tau \right]\,,
\]
and
\[ S_+(\omega) := S(-\omega)+S(\omega) = 2S(\omega) \,. \]
It is common that $S_+\fn$ is plotted instead of $S\fn$, as in Section 4.5 of \cite{cox1966}; this is equivalent to wrapping the negative frequencies over to the positive half-line. \new
\end{remark}

\subsection{Generalisations}

The immigration--birth representation is useful both theoretically and
practically. However it can only be used to describe \emph{linear}
\acp{HP}. Br\'emaud and Massouli\'e \cite{bremaud1996} generalised the
\ac{HP} to its nonlinear form: \new

\begin{definition}[Nonlinear Hawkes process] \label{nonlinear_hawkes}
  Consider a counting process with conditional intensity function of the form
  \[ \cInt{t} = \Psi\left( \int_{-\infty}^t \exite(t-s)\, N(\dif s) \right) \]
  where $\Psi: \R \rightarrow [0,\infty)$, $\exite: (0, \infty) \rightarrow \R$. Then $N\fn$ is a \emph{nonlinear Hawkes process}. Selecting $\Psi(x) = \bInt + x$ reduces $N\fn$ to the linear \ac{HP} of \dfn{hawkes}.
\end{definition}

Modern work on nonlinear \acp{HP} is much rarer than the original
linear case (for simulation see pp.\ 96--116 of \cite{carstensen2010},
and associated theory in \cite{zhu2013}). This is due to a combination
of factors; firstly, the generalisation was introduced relatively
recently, and secondly, the increased complexity frustrates even
simple investigations. \new

Now to return to the extension mentioned earlier, that of a collection
of self- and \emph{mutually-exciting} \acp{HP}. The processes being
examined are collections of one-dimensional \acp{HP} which `excite'
themselves and each other.

\begin{definition}[Mutually exciting Hawkes process] \label{mutually_exciting}
Consider a collection of $m$ counting processes $\{N_1(\cdot)$, $\dots,
N_m(\cdot)\}$ denoted $\Ns$. Say $\{T_{i,j} : i \in \{1, \dots, m\}, j
\in \Nat\}$ are the random arrival times for each counting process
(and $t_{i,j}$ for observed arrivals). If for each $i=1,\dots,m$ then
$N_i\fn$ has conditional intensity of the form
  \begin{equation} \label{multi_cond_int}
    \cInt[i]{t} = \bInt_i + \sum_{j=1}^m \int_{-\infty}^{t} \exite_j(t-u) \,\dif N_j(u)
  \end{equation}
  for some $\bInt_i > 0$ and $\exite_i: (0,\infty)\rightarrow[0,\infty)$, then $\Ns$ is called a \emph{mutually exciting Hawkes process}.
\end{definition}

When the excitation functions are set to be exponentially decaying,
\eq{multi_cond_int} can be written as
\begin{equation} \label{multi_exp_cond_int}
  \cInt[i]{t} = \bInt_i + \sum_{j=1}^m \int_{-\infty}^{t} \alpha_{i,j} \e^{-\beta_{i,j}(t-s)} \,\dif N_j(s)
         = \bInt_i + \sum_{j=1}^m \sum_{t_{j,k} < t} \hspace{-5pt} \alpha_{i,j} \e^{-\beta_{i,j}(t-t_{j,k})}
\end{equation}
for non-negative constants $\{ \alpha_{i,j}, \beta_{i,j} : i,j = 1, \dots, m \}$.

\begin{remark}
There are models for \acp{HP} where the points themselves are
multi-dimensional, for example, spatial \acp{HP} or temporo-spatial \acp{HP}
\cite{mohler2011}. One should not confuse mutually exciting \acp{HP} with
these multi-dimensional \acp{HP}. \new
\end{remark}

\subsection{Financial applications} \label{financial}

This section reviews primarily the work of A{\"\i}t-Sahalia, et
al.~\cite{ait2010} and Filimonov and Sornette \cite{filimonov2012}.
It assumes the reader is familiar with mathematical finance and the use
of stochastic differential equations. \new

\subsubsection{Financial contagion}

We turn our attention to the moest recent
applications of \acp{HP}. A major domain for self- and
mutually-exciting processes is financial analysis. Frequently it is
seen that large movements in a major stock market propagate in foreign
markets as a process called \emph{financial contagion}. Examples of
this phenomenon are clearly visible in historical series of asset
prices; \fig{financial_contagion} illustrates one such case. \new

The `Hawkes diffusion model' introduced by \cite{ait2010} is an
attempt to extend previous models of stock prices to include financial
contagion. Modern models for stock prices are typically built upon the
model popularised by \cite{black1973} where the log returns on the
stock follow geometric Brownian motion. Whilst this seminal paper was
lauded by the economics community, the model inadequately captured the
`fat tails' of the return distribution and so was not commonly used by
traders \cite{haug2009}. Merton \cite{merton1976} attempted to
incorporate heavy tails by including a Poisson jump process to model
booms and crashes in the stock returns; this model is often called
Merton diffusion model. The Hawkes diffusion model extends this model
by replacing the Poisson jump process with a mutually-exciting
\ac{HP}, so that crashes can self-excite and propagate in a market and
between global markets. \new

The basic Hawkes diffusion model describes the log returns of $m$
assets $\{X_1(\cdot), \dots, X_m(\cdot)\}$ where each asset $i =
1,\dots,m$ has associated expected return $\mu_i \in \R$, constant
volatility $\sigma_i \in \R^+$, and standard Brownian motion
$(W_i^X(t) : t \geq 0)$. The Brownian motions have constant
correlation coefficients $\{\rho_{i,j} : i,j = 1, \dots, m\}$. Jumps
are added by a self- and mutually-exciting \ac{HP} (as per
\dfn{mutually_exciting} with some selection of constants
$\alpha_{\cdot,\cdot}$ and $\beta_{\cdot,\cdot}$) with stochastic jump
sizes $(Z_i(t) : t \geq 0)$. The asset dynamics are then assumed to
satisfy
\[ \dif X_i(t) = \mu_i \dif t + \sigma_i \dif W_i^X(t) + Z_i(t) \dif N_i(t)\,. \]

The general Hawkes diffusion model replaces the constant volatilities
with stochastic volatilities $\{V_1(\cdot),$ $\dots, V_m(\cdot)\}$
specified by the Heston model. Each asset $i = 1,\dots,m$ has a:
long-term mean volatility $\theta_i > 0$, rate of returning to this
mean $\kappa_i > 0$, volatility of the volatility $\nu_i > 0$, and
standard Brownian motion $(W_i^V(t) : t \geq 0)$. Correlation between
the $W_{\cdot}^X(\cdot)$'s is optional, yet the effect would be
dominated by the jump component. Then the full dynamics are captured by
\[ \dif X_i(t) = \mu_i \dif t + \sqrt{V_i(t)} \dif W_i^X(t) + Z_i(t) \dif N_i(t)\,, \]
\[ \dif V_i(t) = \kappa_i(\theta_i - V_i(t)) \dif t + \nu_i \sqrt{V_i(t)} \dif W_i^V(t) \,. \]

However the added realism of the Hawkes diffusion model comes at a
high price. The constant volatility model requires $5m + 3m^2$
parameters to be fit (assuming $Z_i(\cdot)$ is characterised by two
parameters) and the stochastic volatility extension requires an extra
$3m$ parameters (assuming $\forall i,j=1,\dots,m$ that
$\sE[W_i(\cdot)^VW_j(\cdot)^V]=0$). In \cite{ait2010} hypothesis tests
reject the Merton diffusion model in favour of the Hawkes diffusion
model, however there are no tests for overfitting the data (for
example, Akaike or Bayesian information criterion comparisons). Remember that
John Von Neumann (reputedly) claimed that ``with four parameters I can
fit an elephant'' \cite{dyson2004}. \new

For computational necessity the authors made a number of
simplifying assumptions to reduce the number of parameters to fit
(such as the background intensity of crashes is the same for all
markets). Even so, the Hawkes diffusion model was only able to be
fitted for pairs of markets ($m=2$) instead of for the globe as a
whole. Since the model was calibrated to daily returns of market
indices, historical data was easily available (for exmaple, from Google or
Yahoo!\ finance); care had to be taken to convert timezones and handle
the different market opening and closing times. The parameter
estimation method used by \cite{ait2010} was the generalised method of
moments, however the theoretical moments derived satisfy long and
convoluted equations. \new

\begin{figure}
  \centering
  \includegraphics[width=0.8\textwidth]{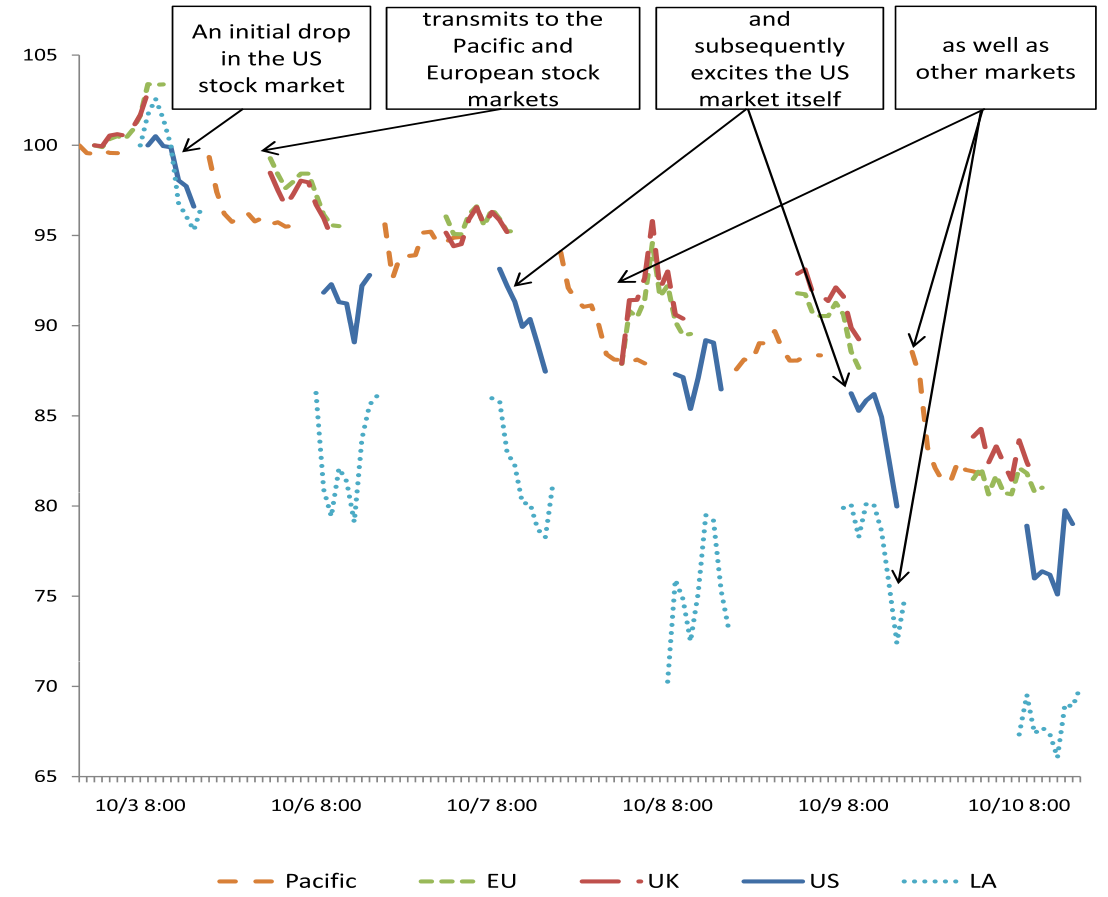}
  \caption[Example of mutual excitation in global markets]{Example of mutual excitation in global markets. This figure plots the cascade of declines in international equity markets experienced between October 3, 2008 and October 10, 2008 in the US; Latin America (LA); UK; Developed European countries (EU); and Developed countries in the Pacific. Data are hourly. The first observation of each price index series is normalised to 100 and the following observations are normalised by the same factor. Source: MSCI MXRT international equity indices on Bloomberg (reproduced from \protect\cite{ait2010}).}
  \label{financial_contagion}
\end{figure}

\subsubsection{Mid-price changes and high-frequency trading}

A simpler system to model is a single stock's price over time, though
there are many different prices to consider. For each stock one could
use: the last transaction price, the best ask price, the best bid
price, or the mid-price (defined as the average of best ask and best
bid prices). The last transaction price includes inherent
microstructure noise (for example, the bid--ask bounce), and the best ask and
bid prices fail to represent the actions of both buyers and sellers in
the market. \new

Filimonov and Sornette \cite{filimonov2012} model the mid-price
changes over time as a \ac{HP}. In particular they look at long-term
trends of the (estimated) branching ratio. In this context, $n$
represents the proportion of price moves that are not due to external
market information but simply reactions to other market participants.
This ratio can be seen as the quantification of the principle of
economic reflexivity. The authors conclude that the branching ratio
has increased dramatically from $30\%$ in 1998 to $70\%$ in 2007. \new

Later that year \cite{lorenzen2012} critiqued the test procedure used
in this analysis. Filimonov and Sornette \cite{filimonov2012} had
worked with a dataset with timestamps accurate to a second, and this
often led to multiple arrivals nominally at the same time (which is an
impossible event for simple point processes). Fake precision was
achieved by adding Unif$(0,1)$ random fractions of seconds to all
timestamps, a technique also used by \cite{bowsher2007}. Lorenzen
found that this method added an element of smoothing to the data which
gave it a better fit to the model than the actual millisecond
precision data. The randomisation also introduced bias to the \ac{HP}
parameter estimates, particularly of $\alpha$ and $\beta$. Lorenzen
formed a crude measure of high-frequency trading activity leading to
an interesting correlation between this activity and $n$ over the
observed period. \new

\begin{remark}
  Fortunately we have received comments from referees suggesting other very important works to consider, which we will briefly list here. The importance of Bowsher \cite{bowsher2007} is highlighted, as is the series by Chavez-Demoulin et al.\ \cite{chavez2005,chavez2012}. They point to the book by McNeil et al.\ \cite{mcneil2015} where a section is devoted to \ac{HP} applications, and stress the relevance of the Parisian school in applying \acp{HP} to microstructure modelling, for example, the paper by Bacry et al.\ \cite{bacry2013}.
\end{remark}

\section{Parameter estimation} \label{parameter_estimation}

This section investigates the problem of generating parameters estimates $\hat{\thetas} = (\hat{\bInt}, \hat{\alpha}, \hat{\beta})$ given some finite set of arrival times $\bft= \{t_1, t_2, \dots, t_k\}$ presumed to be from a \ac{HP}.
For brevity, the notation here will omit the $\hat{\thetas}$ and $\bft$ arguments from functions: \ $L = L(\hat{\thetas};\,\bft)$, $l = l(\hat{\thetas};\,\bft)$, $\cInt{t}=\cInt{t;\,\bft, \hat{\thetas}}$, and $\Lambda(t) = \Lambda(t;\,\bft, \hat{\thetas})$.
The estimators are tested over simulated data, for the sake of simplicity and lack of relevant data. Unfortunately this method bypasses the many significant challenges raised by real datasets, challenges that caused \cite{filimonov2013} to state that
\begin{quote}
\emph{``Our overall conclusion is that calibrating the Hawkes process is akin to an excursion
within a minefield that requires expert and careful testing before any conclusive step
can be taken.''}
\end{quote}

The method considered is \emph{maximum likelihood estimation}, which begins by finding the \emph{likelihood function}, and estimates the model parameters as the inputs which maximise this function.

\subsection{Likelihood function derivation}

Daley and Vere-Jones \cite[Proposition 7.2.III]{daley2003a} give the
following result. \new

\begin{theorem}[Hawkes process likelihood]
	Let $N\fn$ be a regular point process on $[0, T]$ for some finite positive $T$, and let $t_1, \dots, t_k$ denote a realisation of $N\fn$ over $[0, T]$. Then, the likelihood $L$ of $N\fn$ is expressible in the form
	\[ L = \Big[\prod_{i=1}^k \cInt{t_i} \Big]\exp\Big(-\int_0^T \cInt{u} \dif u \Big)\,. \]
\end{theorem}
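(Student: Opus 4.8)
The plan is to build the likelihood from the conditional arrival densities already introduced, converting each factor into intensity form via the hazard relationship of \eq{cond_int_joint_dens}, and then to append the contribution of the final ``censored'' interval in which no arrival is observed between $t_k$ and $T$. First I would write the likelihood as the joint density of the realisation $\{t_1,\dots,t_k\}$ on $[0,T]$. This is \emph{almost} the product \eq{joint_dens}, namely $\prod_{i=1}^k f^*(t_i \,|\, \Hfil(t_{i-1}))$, but observing the process over all of $[0,T]$ carries one extra piece of information beyond the arrival times: that no further arrival occurs in $(t_k, T]$. Accordingly the likelihood is
\[ L = \Big[\prod_{i=1}^k f^*(t_i \,|\, \Hfil(t_{i-1}))\Big]\,\big(1 - F^*(T \,|\, \Hfil(t_k))\big)\,, \]
the trailing survival factor accounting for the empty interval.

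Next I would rewrite each factor using \eq{cond_int_joint_dens}. Since $f^*(t) = \tfrac{\dif}{\dif t}F^*(t)$, that relation reads $\cInt{t} = -\tfrac{\dif}{\dif t}\log\big(1 - F^*(t)\big)$. On the interval $(t_{i-1}, t_i)$, where the relevant $F^*$ is that of the next arrival conditioned on $\Hfil(t_{i-1})$ and therefore vanishes at the left endpoint, integrating gives
\[ 1 - F^*(t_i \,|\, \Hfil(t_{i-1})) = \exp\Big(-\int_{t_{i-1}}^{t_i} \cInt{s}\,\dif s\Big)\,, \]
and hence $f^*(t_i \,|\, \Hfil(t_{i-1})) = \cInt{t_i}\exp\big(-\int_{t_{i-1}}^{t_i}\cInt{s}\,\dif s\big)$. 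The same computation on $(t_k, T)$ turns the survival factor into $\exp\big(-\int_{t_k}^{T}\cInt{s}\,\dif s\big)$. Substituting these into the display for $L$ and setting $t_0 = 0$, the product of exponentials telescopes, because the ranges $(t_{i-1},t_i)$ together with the final $(t_k,T)$ tile $(0,T)$ without overlap. This yields exactly
\[ L = \Big[\prod_{i=1}^k \cInt{t_i}\Big]\exp\Big(-\int_0^T \cInt{u}\,\dif u\Big)\,. \]

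The step I expect to be the crux is the passage from the conditional density to the integrated-intensity form, specifically justifying that the survival function resets to $1$ immediately after each arrival so that the lower limit contributes nothing. This rests on reading $F^*(\cdot \,|\, \Hfil(t_{i-1}))$ as the distribution of the \emph{next} arrival given the history up to $t_{i-1}$, whence $F^*(t_{i-1}\,|\,\Hfil(t_{i-1}))=0$ directly from its definition, and on the fact that between arrivals $\cInt{\cdot}$ is a deterministic function of time (the history is unchanged), so the elementary identity $\cInt{t} = -\tfrac{\dif}{\dif t}\log(1-F^*(t))$ may be integrated piecewise. Once this per-interval identity is in hand, the telescoping and reassembly into a single integral over $[0,T]$ are routine.
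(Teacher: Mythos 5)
Your proposal is correct and follows essentially the same route as the paper's proof: both factor the likelihood via \eq{joint_dens} together with the survival term $1-F^*(T)$ for the empty interval $(t_k,T]$, convert each factor to intensity form by integrating the hazard identity $\cInt{t} = -\frac{\dif}{\dif t}\log(1-F^*(t))$ from \eq{cond_int_joint_dens} (using $F^*=0$ at the last arrival, exactly as in \eq{fstar}), and then telescope the exponentials into $\exp\big(-\int_0^T \cInt{u}\,\dif u\big)$. The only cosmetic difference is that the paper first treats observation up to $t_k$ and then appends the censored tail, whereas you carry the tail factor from the outset.
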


\begin{proof}
First assume that the process is observed up to the time of
the $k$th arrival.
The joint density function from \eq{joint_dens} is
\[ L = f(t_1, t_2, \dots, t_k) = \prod_{i=1}^k f^*(t_i)\,. \]
This function can be written in terms of the conditional intensity function. Rearrange \eq{cond_int_joint_dens} to find $f^*(t)$ in terms of $\cInt{t}$ (as per \cite{rasmussen2009}):
\[
	\cInt{t} = \frac{f^*(t)}{1-F^*(t)}
		= \frac{\frac{\dif}{\dif t}F^*(t)}{1-F^*(t)}
		= -\frac{\dif \, \log(1 - F^*(t))}{\dif t} \,.
\]
Integrate both sides over the interval $(t_k, t)$:
\[ - \int_{t_k}^t \cInt{u} \dif u = \log(1 - F^*(t)) - \log(1-F^*(t_k))\,. \]
The \ac{HP} is a \emph{simple} point process, meaning that multiple
arrivals cannot occur at the same time. Hence $F^*(t_k)=0$ as $T_{k+1}
> t_k$, and so
\begin{equation} \label{generating_eq}
	- \int_{t_k}^t \cInt{u} \dif u = \log(1 - F^*(t)) \,.
\end{equation}
Further rearranging yields
\begin{equation} \label{fstar}
F^*(t) = 1 - \exp \Big( - \int_{t_k}^t \cInt{u} \dif u \Big), \qquad
f^*(t) = \cInt{t} \exp \Big(- \int_{t_k}^t \cInt{u} \dif u \Big) \,.
\end{equation}
Thus the likelihood becomes
\begin{equation} \label{likelihood}
	L = \prod_{i=1}^k f^*(t_i)
	  = \prod_{i=1}^k \cInt{t_i} \exp\Big(-\int_{t_{i-1}}^{t_i} \cInt{u} \dif u \Big)
 	  = \Big[\prod_{i=1}^k \cInt{t_i} \Big] \exp\Big(-\int_0^{t_k} \cInt{u} \dif u \Big) \,.
\end{equation}

Now suppose that
the process is observed over some time period $[0, T] \supset [0,
t_k]$.
The likelihood will then include the probability of seeing no arrivals
in the time interval $(t_k, T]$:
\[ L = \Big[\prod_{i=1}^k f^*(t_i)\Big](1-F^*(T))\,. \]
Using the formulation of $F^*(t)$ from \eq{fstar}, then
\[ L = \Big[\prod_{i=1}^k \cInt{t_i} \Big]\exp\Big(-\int_0^T \cInt{u} \dif u \Big) \,. \]
The completes the proof.
\end{proof}

\subsection{Simplifications for exponential decay}

With the likelihood function from \eq{likelihood}, the log-likelihood for the interval $[0, t_k]$ can be derived as
\begin{equation} \label{log_likelihood}
	l = \sum_{i=1}^k \log(\cInt{t_i}) - \int_{0}^{t_k} \lambda^*(u) \dif u
	= \sum_{i=1}^k \log(\cInt{t_i}) - \Lambda(t_k) \,.
\end{equation}
Note that the integral over $[0, t_k]$ can be broken up into the segments $[0, t_1]$, $(t_1, t_2]$, $\dots$, $(t_{k-1}, t_k]$, and therefore
\[
 \Lambda(t_k) = \int_0^{t_k} \cInt{u} \dif u
  = \int_0^{t_1} \cInt{u} \dif u + \sum_{i=1}^{k-1} \int_{t_i}^{t_{i+1}} \cInt{u} \dif u \,.
\]
This can be simplified in the case where $\cInt{\cdot}$ decays exponentially:
\begin{align*}
	\Lambda(t_k) &= \int_0^{t_1} \hspace{-3pt} \bInt \dif u + \sum_{i=1}^{k-1} \int_{t_i}^{t_{i+1}} \hspace{-5pt}  \bInt + \hspace{-2pt}  \sum_{t_j < u} \hspace{-2pt}  \alpha \e^{-\beta (u-t_j)} \dif u \\
	&= \bInt t_k + \alpha \sum_{i=1}^{k-1} \int_{t_i}^{t_{i+1}} \sum_{j=1}^i \e^{-\beta (u-t_j)} \dif u \\
	&= \bInt t_k + \alpha \sum_{i=1}^{k-1} \sum_{j=1}^i \int_{t_i}^{t_{i+1}} \e^{-\beta (u-t_j)} \dif u \\
	&= \bInt t_k - \frac{\alpha}{\beta} \sum_{i=1}^{k-1} \sum_{j=1}^i \left[ \e^{-\beta (t_{i+1}-t_j)} - \e^{-\beta (t_i-t_j)} \right] \,.
\end{align*}
Finally, many of the terms of this double summation cancel out leaving
\begin{equation} \label{hawkes_compensator}
	\Lambda(t_k) = \bInt t_k - \frac{\alpha}{\beta} \sum_{i=1}^{k-1} \left[ \e^{-\beta (t_k-t_i)} - \e^{-\beta (t_i-t_i)} \right]
		= \bInt t_k - \frac{\alpha}{\beta} \sum_{i=1}^k \left[ \e^{-\beta (t_k-t_i)} - 1 \right] \,.
\end{equation}
Note that here the final summand is unnecessary, though it is often included, see \cite{lorenzen2012}.
Substituting $\cInt{\cdot}$ and $\Lambda(\cdot)$ into \eq{log_likelihood} gives
\begin{equation} \label{log_like_full}
	l = \sum_{i=1}^k \log\Big[\bInt + \alpha\sum_{j=1}^{i-1} \e^{-\beta (t_i-t_j)} \Big] - \bInt t_k + \frac{\alpha}{\beta} \sum_{i=1}^k \left[ \e^{-\beta(t_k-t_i)} - 1 \right] \,.
\end{equation}

This direct approach is computationally infeasible as the first term's
double summation entails $\Oh(k^2)$ complexity. Fortunately the similar
structure of the inner summations allows $l$ to be computed with
$\Oh(k)$ complexity \cite{ogata1978,crowley2013}.
For $i \in \{2, \dots, k\}$, let $A(i) =
\sum_{j=1}^{i-1} \e^{-\beta(t_i-t_j)}$, so that
\begin{equation} \label{recursive}
	A(i)
	  = \e^{-\beta t_i + \beta t_{i-1}} \sum_{j=1}^{i-1} \e^{-\beta t_{i-1} + \beta t_j}
	  = \e^{-\beta (t_i - t_{i-1})} \Big( 1 + \sum_{j=1}^{i-2} \e^{-\beta (t_{i-1} - t_j)} \Big)
	  = \e^{-\beta (t_i - t_{i-1})} (1 + A(i-1)) \,.
\end{equation}
With the added base case of $A(1) = 0$, $l$ can be rewritten as
\begin{equation} \label{log_like_rec}
	l = \sum_{i=1}^k \log(\bInt + \alpha A(i)) - \bInt t_k + \frac{\alpha}{\beta} \sum_{i=1}^k \left[ \e^{-\beta(t_k-t_i)} - 1 \right] \,.
\end{equation}

Ozaki \cite{ozaki1979} also gives the partial derivatives and the Hessian for this log-likelihood function. Of particular note is that each derivative calculation can be achieved in order $\Oh(k)$ complexity when a recursive approach (similar to \eq{recursive}) is taken \cite{ogata1981}.
\begin{remark}
 The recursion implies that the joint process $(N(t), \cInt{t})$ is Markovian (see Remark 1.22 of \cite{liniger2009}). \new
\end{remark}

\subsection{Discussion}

Understanding of the maximum likelihood estimation method for the
\ac{HP} has changed significantly over time. The general form of the
log-likelihood function \eq{log_likelihood} was known by
Rubin~\cite{rubin1972}. It was applied to the \ac{HP} by
Ozaki~\cite{ozaki1979} who derived \eq{log_like_full} and the improved
recursive form~\eq{log_like_rec}. Ozaki also found (as noted earlier)
an efficient method for calculating the derivatives and the Hessian
matrix. Consistency, asymptotic normality and efficiency of the
estimator were proved by Ogata~\cite{ogata1978}. \new

It is clear that
the maximum likelihood estimation will usually be very effective
for model fitting. However, \cite{filimonov2012} found that,
for small samples, the estimator produces significant bias,
encounters many local optima, and is highly sensitive to the selection
of excitation function. \new
Additionally, the $\Oh(k)$ complexity can render the method useless when
samples become large; remember that any iterative optimisation routine
would calculate the likelihood function perhaps thousands of times.
The R `hawkes' package thus implements this routine in C++ in an
attempt to mitigate the performance issues. \new

This `performance bottleneck' is largely the cause of the latest trend
of using the generalised method of moments to perform parameter
estimation. Da Fonseca and Zaatour \cite{dafonseca2014} state that the
procedure is ``instantaneous'' on their test sets. The method uses
sample moments and the sample autocorrelation function which are
smoothed via a (rather arbitrary) user-selected procedure. \new

\section{Goodness of fit} \label{goodness}

This section outlines approaches to determining the
appropriateness of a \acp{HP} model for point data, which is
a critical link in their application.

\new

\subsection{Transformation to a Poisson process} \label{transformation}

Assessing the goodness of fit for some point data to a Hawkes model is an important practical consideration. In performing this assessment the point process' compensator is essential, as is the random time change theorem (here adapted from \cite{brown2002}): \new

\begin{theorem}[Random time change theorem] \label{time_change}
	Say $\{t_1, t_2, \dots, t_k\}$ is a realisation over time $[0, T]$ from a point process with conditional intensity function $\cInt{\cdot}$. If $\cInt{\cdot}$ is positive over $[0, T]$ and $\Lambda(T) < \infty$ \ac{a.s.} then the transformed points $\{ \Lambda(t_1), \Lambda(t_2), \dots, \Lambda(t_k) \}$ form a Poisson process with unit rate.
\end{theorem}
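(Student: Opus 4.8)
The plan is to show that the transformed inter-arrival times are independent and identically distributed unit-rate exponential random variables, since this property characterises a unit-rate Poisson process. Write $\tau_i = \Lambda(t_i)$ for the transformed points (with $\tau_0 = \Lambda(0) = 0$) and let $\xi_i = \tau_i - \tau_{i-1} = \int_{t_{i-1}}^{t_i} \cInt{u}\,\dif u$ denote the transformed inter-arrival spacings. The goal is to establish that, conditional on the history $\Hfil(t_{i-1})$, each $\xi_i$ is exponentially distributed with unit rate and independent of the past.

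First I would use the positivity hypothesis on $\cInt{\cdot}$ to note that $\Lambda\fn$ is continuous and strictly increasing on $[0,T]$, hence a bijection onto $[0, \Lambda(T)]$ with a well-defined inverse $\Lambda^{-1}$; the assumption $\Lambda(T) < \infty$ \ac{a.s.} guarantees this image is a genuine finite interval. This regularity is precisely what legitimises the change of variables in the next step.

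Next I would compute the conditional survival function of $\xi_i$. For $x > 0$, set $s = \Lambda^{-1}(\tau_{i-1} + x)$, so that $\int_{t_{i-1}}^s \cInt{u}\,\dif u = x$ by the definition of $\Lambda$. Since $\Lambda$ is strictly increasing, the event $\{\xi_i > x\}$ coincides with $\{t_i > s\}$, whence
\[ \Prob(\xi_i > x \mid \Hfil(t_{i-1})) = \Prob(t_i > s \mid \Hfil(t_{i-1})) = 1 - F^*(s) \,. \]
Invoking the expression for $F^*$ already derived in \eq{fstar} (in its general between-arrivals form, with $t_k$ replaced by $t_{i-1}$), the right-hand side equals $\exp(-\int_{t_{i-1}}^s \cInt{u}\,\dif u) = \e^{-x}$. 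Thus each $\xi_i$ is $\mathrm{Exp}(1)$ and, crucially, this law does not depend on $\Hfil(t_{i-1})$; since the history determines $\xi_1, \dots, \xi_{i-1}$, a repeated application of the tower property yields that the $\xi_i$ are mutually independent. Independent unit-rate exponential spacings are exactly the defining property of a unit-rate Poisson process, completing the argument.

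The main obstacle I expect is not the distributional computation but the justification that memorylessness propagates into genuine \emph{joint} independence and a bona fide Poisson process on the rescaled axis. The elementary inter-arrival argument above closes each gap conditionally on the past, but a fully rigorous treatment would instead appeal to Watanabe's characterisation---showing directly that the time-changed counting process $\tilde N(s) = N(\Lambda^{-1}(s))$ has compensator equal to the identity $s$, which characterises the unit-rate Poisson process. Some care is also needed at the boundary: the residual interval $(t_k, T]$ contributes no further points, consistent with the transformed process being observed on $[0, \Lambda(T)]$.
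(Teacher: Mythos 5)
The paper never actually proves this theorem: it is quoted (``adapted from \cite{brown2002}'') and then immediately restated as \thrm{residual_analysis} via Daley and Vere-Jones, also without proof. So there is no in-paper argument to compare yours against; what you have written is essentially the standard proof of the time-rescaling theorem --- the very argument in the reference the paper cites --- and it is correct in outline. Its nicest feature is that the key distributional step, $\Prob(\xi_i > x \mid \Hfil(t_{i-1})) = 1 - F^*(s) = \e^{-x}$ with $s = \Lambda^{-1}(\Lambda(t_{i-1}) + x)$, reuses \eq{fstar}, which the paper derives in its likelihood section, so the proof is assembled entirely from ingredients already on the page. Two points deserve to be made explicit rather than glossed. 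First, for the conditioning to be legitimate you need $s$ to be $\Hfil(t_{i-1})$-measurable; this holds because between arrivals no new points intervene, so $\cInt{u}$, and hence $\Lambda(u)$, on $(t_{i-1}, t_i)$ is determined by the history at $t_{i-1}$ alone --- this is precisely where self-excitation could have broken the argument, and it is the one substantive fact your sketch uses silently. Second, the finite window: the number of points $k$ is random and the last spacing is censored at $T$, so the correct conclusion is that the transformed points form a unit-rate Poisson process \emph{observed on} $[0, \Lambda(T)]$, with the empty interval $(\Lambda(t_k), \Lambda(T)]$ contributing the survival factor $\exp\big(-(\Lambda(T)-\Lambda(t_k))\big)$ rather than being ignored; treating $k$ as fixed would not be rigorous. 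You flag both issues yourself, and your proposed fallback --- Watanabe's characterisation, i.e.\ showing the time-changed counting process has compensator $s$ --- is indeed the standard airtight route, and is closer in spirit to Proposition 7.4.IV of \cite{daley2003a}, which is exactly the reformulation the paper states next.
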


The random time change theorem is fundamental to the model fitting
procedure called \emph{(point process) residual analysis}. Original
work \cite{embrechts2011} on residual analysis goes back to
\cite{meyer1971}, \cite{papangelou1972}, and \cite{watanabe1964}.
Daley and Vere-Jones's Proposition 7.4.IV \cite{daley2003a} rewords
and extends the theorem as follows.\new

\begin{theorem}[Residual analysis] \label{residual_analysis}
	Consider an unbounded, increasing sequence of time points  $\{t_1, t_2, \dots \}$ in the half-line $(0, \infty)$, and a monotonic, continuous compensator $\Lambda(\cdot)$ such that $\lim_{t \rightarrow \infty} \Lambda(t) = \infty$ \ac{a.s.} The transformed sequence $\{t^*_1, t^*_2, \dots\} = \{\Lambda(t_1), \Lambda(t_2), \dots\}$, whose counting process is denoted $N^*(t)$, is a realisation of a unit rate Poisson process if and only if the original sequence $\{t_1, t_2, \dots \}$ is a realisation from the point process defined by $\Lambda(\cdot)$.
\end{theorem}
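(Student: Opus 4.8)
The statement is the two-sided sharpening of the random time change theorem (\thrm{time_change}), so the plan is to prove each implication separately, with the invertibility of $\Lambda$ as the common tool. First I would record the structural consequences of the hypotheses: since $\Lambda$ is continuous, monotone and satisfies $\Lambda(t)\to\infty$, and since (being the compensator of a process with a positive conditional intensity $\cInt{\cdot}$) it is in fact strictly increasing, it is a homeomorphism of $[0,\infty)$ onto $[0,\infty)$ with a continuous, strictly increasing inverse $\Lambda^{-1}$. This inverse is exactly what lets the time change be run backwards, which is the whole content of the converse implication.

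For the ``if'' direction (original sequence is a realisation of the process defined by $\Lambda$ $\Rightarrow$ $N^*$ is unit-rate Poisson), I would apply \thrm{time_change} on each finite window $[0,T]$: it gives that $\{\Lambda(t_i) : t_i \le T\}$ is a unit-rate Poisson process on $[0,\Lambda(T)]$. I would then let $T\to\infty$; because $\Lambda(T)\to\infty$ the images $[0,\Lambda(T)]$ exhaust $(0,\infty)$, and the finite-dimensional Poisson laws obtained on the nested windows are consistent, so the limit is a genuine unit-rate Poisson process on the whole half-line. The only thing to check here is that no points are lost or accumulate, which is guaranteed by strict monotonicity and by $\Lambda(t)\to\infty$.

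For the ``only if'' direction I would exploit that compensators transform covariantly under a continuous strictly increasing time change. Writing $N(t)=N^*(\Lambda(t))$, the compensator of $N$ is the compensator of $N^*$ evaluated along $\Lambda$. If $N^*$ is a unit-rate Poisson process its compensator is the identity $s\mapsto s$, so the compensator of $N$ is the identity composed with $\Lambda$, namely $\Lambda$ itself; equivalently, one applies \thrm{time_change} to $N^*$ with the time change $\Lambda^{-1}$. Finally I would invoke the characterisation that a simple point process whose compensator is $\Lambda$ is, by uniqueness of the compensator, a realisation of the point process defined by $\Lambda$, which closes the equivalence.

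The main obstacle is the converse together with the passage to the half-line. Justifying the covariant transformation of the compensator is delicate because $\Lambda$ is itself history-dependent, and one must keep the predictability and measurability bookkeeping correct when changing clocks; and extending \thrm{time_change} from a fixed finite $T$ to all of $(0,\infty)$ relies essentially on $\Lambda(t)\to\infty$ \ac{a.s.} to ensure the transformed process is defined on the entire positive half-line with no escape of mass to infinity. Strict monotonicity is the other hypothesis doing quiet but essential work: without it $\Lambda^{-1}$ is only defined on the range of $\Lambda$ and the clean bijection between realisations breaks down, so if $\Lambda$ were merely non-decreasing I would restrict attention to its set of strict increase.
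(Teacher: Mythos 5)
A preliminary remark: the paper itself contains no proof of this theorem---it is quoted, in reworded form, from Proposition 7.4.IV of \cite{daley2003a}---so your proposal can only be compared with the argument in that source, and your outline is in fact the standard one used there: pathwise bijectivity of the time change, covariant transformation of compensators, and the fact that a compensator pins down the law of a simple point process. Your ``if'' direction (apply \thrm{time_change} on windows $[0,T]$ and let $T \to \infty$, using $\Lambda(T) \to \infty$ \ac{a.s.}\ so the transformed points exhaust the half-line) is sound, with the caveat that the consistency argument should be phrased through finite-dimensional distributions on fixed bounded sets, since the image windows $[0,\Lambda(T)]$ are random.

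There are two genuine thin spots in your converse. First, ``$N^*$ is a realisation of a unit rate Poisson process'' is a statement about the \emph{law} of $N^*$; Watanabe's characterisation converts this into ``the compensator of $N^*$ is the identity'' only with respect to the \emph{natural} filtration of $N^*$, whereas your covariant-transformation step needs that statement with respect to the time-changed history $\Hfil(\Lambda^{-1}(s))$ of the original process. These two filtrations do coincide, but for a reason you should make explicit: $\Lambda$ is a pathwise functional of the realisation which can be inverted point-by-point---this is exactly the inverse-compensator construction around \eq{to_solve}---so observing $N^*$ up to time $s$ carries the same information as observing $N$ up to time $\Lambda^{-1}(s)$. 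Without this identification, the compensator identity is simply not available in the filtration in which you use it, and the step fails. Second, your closing appeal to ``uniqueness of the compensator'' is stated backwards: what closes the argument is not that a process has a unique compensator, but that the compensator (equivalently, the conditional intensity) determines the finite-dimensional distributions of a simple point process---this is Proposition 7.2.IV of \cite{daley2003a}, which the paper already cites in Section~\ref{background}. With that substitution and the filtration point made explicit, your outline becomes a correct proof plan, essentially the one in the cited source.
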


Hence, equipped with a closed form of the compensator from
\eq{hawkes_compensator}, the quality of the statistical inference can
be ascertained using standard fitness tests for Poisson processes.
\fig{transformed} shows a realisation of a \ac{HP} and the
corresponding transformed process. In \fig{transformed} $\Lambda(t)$
appears identical to $N(t)$. They are actually slightly different
($\Lambda\fn$ is continuous) however the similarity is expected due to
Doob--Meyer decomposition of the compensator.

\begin{figure}
    \centering
    \sidesubfloat[]{\includegraphics[width=0.4\textwidth]{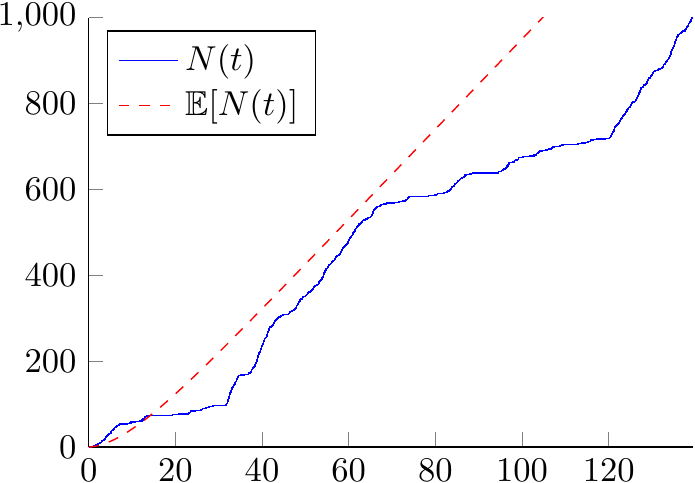}}\hspace{2em}
    \sidesubfloat[]{\hspace{11pt}\includegraphics[width=0.375\textwidth]{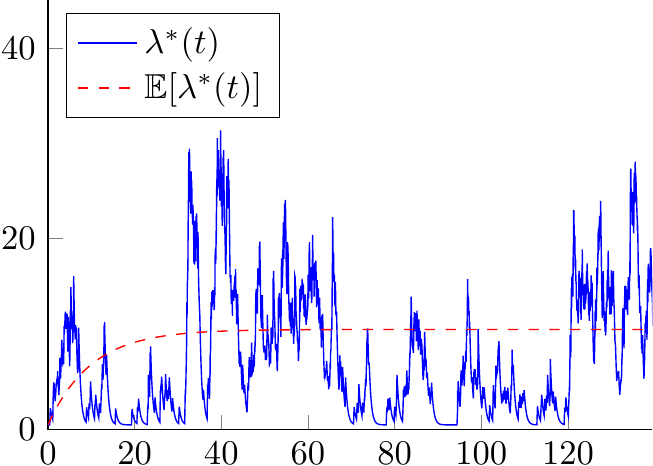}} \\
    \sidesubfloat[]{\hspace{2pt}\includegraphics[width=0.4\textwidth]{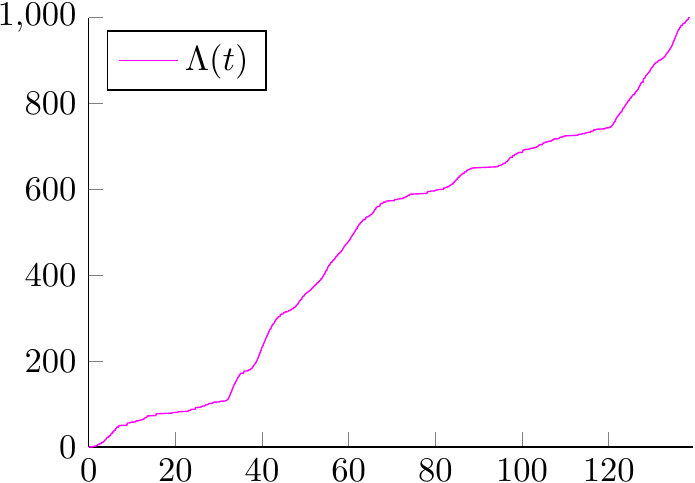}}\hspace{2em}
    \sidesubfloat[]{\includegraphics[width=0.4\textwidth]{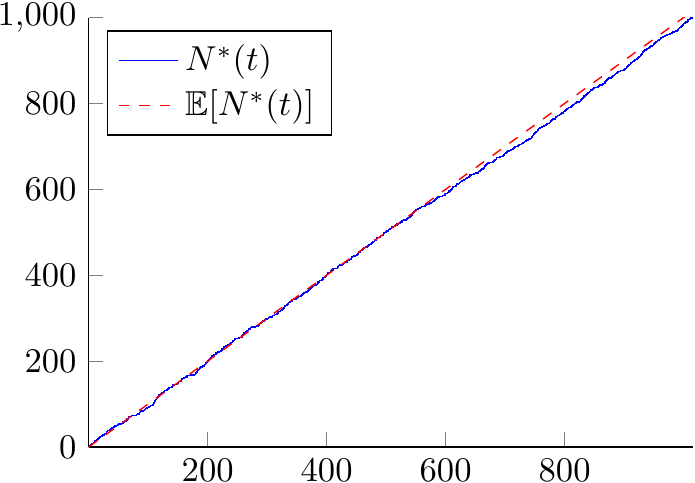}}
    \caption{An example of using the random time change theorem to transform a Hawkes process into a unit rate Poisson process. (a) A Hawkes process $N(t)$ with $(\bInt, \alpha, \beta) = (0.5, 2, 2.1)$, with the associated (b) conditional intensity function and (c) compensator. (d) The transformed process $N^*(t)$, where $t_i^* = \Lambda(t_i)$.}
    \label{transformed}
\end{figure}

\subsection{Tests for Poisson process}

\subsubsection{Basic tests}

There are many procedures for testing whether a series of points form
a Poisson process (see \cite{cox1966} for an extensive treatment). As
a first test, one can run a hypothesis test to check $\sum_i
\1_{\{t^*_i < t\}} \sim \mathrm{Poi}(t)$. If this initial test
succeeds then the interarrival times,
\[ \{\tau_1, \tau_2, \tau_3, \dots\} = \{t^*_1, t^*_2-t^*_1, t^*_3-t^*_2, \dots\}, \]
should be tested to ensure $\tau_i \iid \mathrm{Exp}(1)$. A
qualitative approach is to create a \ac{Q--Q} plot for $\tau_i$ using
the exponential distribution (see for example \fig{qq}). Otherwise a
quantitative alternative is to run Kolmogorov--Smirnov (or perhaps
Anderson--Darling) tests. \new

\subsubsection{Test for independence}

The next test, after confirming there is reason to believe that the
$\tau_i$ are exponentially distributed, is to check their
independence. This can be done by looking for autocorrelation in the
$\tau_i$ sequence. Obviously zero autocorrelation does not imply
independence, but a non-zero amount would certainly imply a
non-Poisson model. A visual examination can be conducted by plotting
the points $(U_{i+1}, U_i)$. If there are noticeable patterns then the
$\tau_i$ are autocorrelated. Otherwise the points should look evenly
scattered; see for example~\fig{autocorrelation}. Quantitative
extensions exist; for example see Section~3.3.3 of~\cite{knuth2014},
or serial correlation tests in~\cite{kroese2011}. \new

\begin{figure}[h]
  \centering
  \sidesubfloat[]{\includegraphics[width=0.4\textwidth]{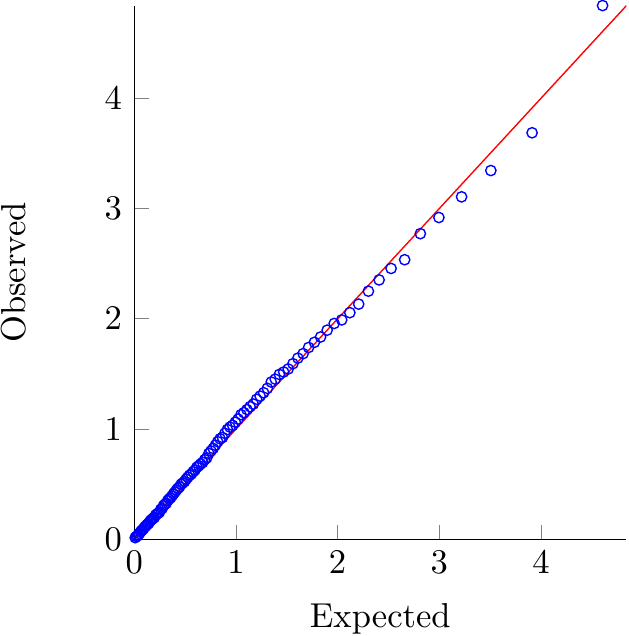}\label{qq}}\hspace{3em}
  \sidesubfloat[]{\includegraphics[width=0.4\textwidth]{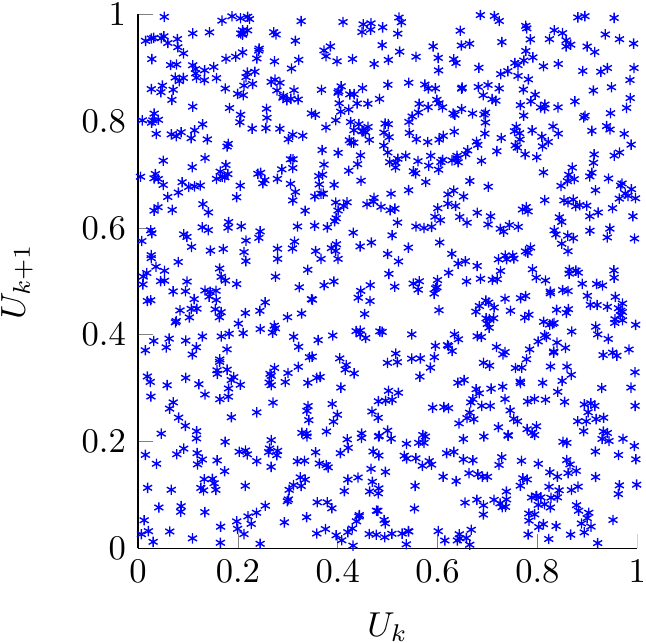}\label{autocorrelation}}
  \caption{(a) \ac{Q--Q} testing for \ac{i.i.d.} Exp(1) interarrival times. (b) A qualitative autocorrelation test. The $U_k$ values are defined as $U_k~=~F(t^*_k-t^*_{k-1}) = 1-\e^{-(t^*_k-t^*_{k-1})}$.
}
  \label{final_pair}
\end{figure}

\subsubsection{Lewis test}

A statistical test with more power is the Lewis test as described by
\cite{kim2013}. Firstly, it relies on the fact that if $\{t^*_1,
t^*_2, \dots, t^*_N\}$ are arrival times for a unit rate Poisson
process then $\{t^*_1/t^*_N, t^*_2/t^*_N, \dots, t^*_{N-1}/t^*_N\}$
are distributed as the order statistics of a uniform $[0,1]$ random
sample. This observation is called conditional uniformity, and forms
the basis for a test itself. Lewis' test relies on applying Durbin's
modification (introduced in \cite{durbin1961} with a widely applicable
treatment by \cite{lewis1965}). \new

\subsubsection{Brownian motion approximation test} \label{bm_stuff}

An approximate test for Poissonity can be constructed by using the
Brownian motion approximation to the Poisson process. This is to say,
the observed times are transformed to be (approximately) Brownian
motion, and then known properties of Brownian motion sample paths can
be used to accept or reject the original sample. \new

The motivation for this line of enquiry comes from Algorithm 7.4.V of
\cite{daley2003a}, which is described as an ``approximate
Kolmogorov--Smirnov-type test''. Unfortunately, a typographical error
causes the algorithm (as printed) to produce incorrect answers for
various significance levels. An alternative test based on the Brownian
motion approximation is proposed here. \new

Say that $N(t)$ is a Poisson process of rate $T$. Define $M(t) =
(N(t)-tT)/\sqrt{T}$ for $t \in [0, 1]$. Donsker's invariance principle
implies that, as $T \to \infty$, $(M(t) : t \in [0,1])$ converges
in distribution to standard Brownian motion $(B(t) : t \in [0,1])$.
\fig{bm_approx} shows example realisations of $M(t)$ for various $T$
that, at least qualitatively, are reasonable approximations to
standard Brownian motion. \new

An alternative test is to utilise the first arcsine law for Brownian
motion, which states that the random time $M^* \in [0,1]$, given by
\[ M^* = \arg\max_{s \in [0, 1]} B(s)\,, \]
is arcsine distributed (that is, $M^* \sim \mathrm{Beta}(1/2, 1/2)$). \new
Therefore the test takes a sequence of arrivals observed over $[0, T]$ and:
\begin{enumerate}
\item transforms the arrivals to $\{t^*_1 / T, t^*_2 / T, \dots, t^*_k
/ T\}$ which should be a Poisson process with rate $T$ over $[0, 1]$,
\item constructs the Brownian motion approximation $M(t)$ as above,
finds the maximiser $M^*$, and
\item accepts the `unit-rate Poisson process' hypothesis if $M^*$ lies
within the $(\alpha/2, 1-\alpha/2)$ quantiles of the
$\mathrm{Beta}(1/2, 1/2)$ distribution; otherwise it is
rejected.
\end{enumerate}
As a final note, many other tests can be performed based on other
properties of Brownian motion. For example, the test could be based simply
on noting that $M(1) \sim \mathrm{N}(0,1)$, and thus accepts if $M(1) \in
[Z_{\alpha/2}, Z_{1-\alpha/2}]$ and rejects otherwise.

\begin{figure}[h]
	\centering
	\sidesubfloat[]{\includegraphics[width=0.45\textwidth]{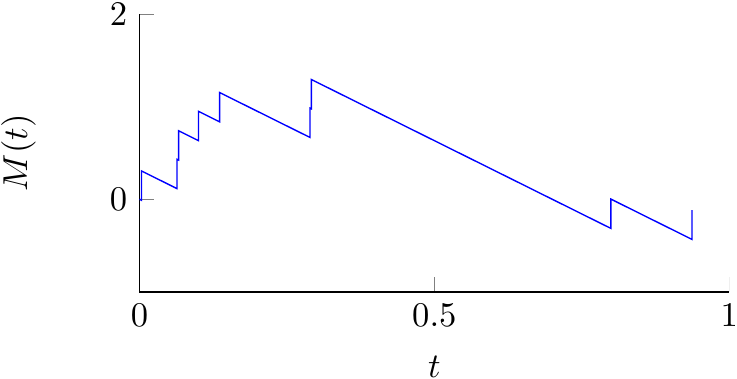}}~
	\sidesubfloat[]{\includegraphics[width=0.45\textwidth]{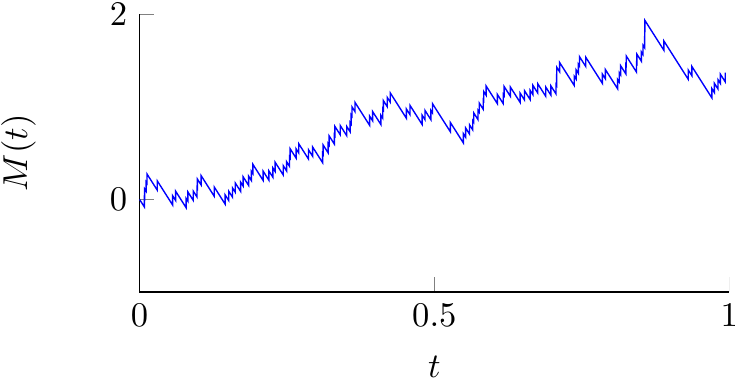}}\\
	\sidesubfloat[]{\includegraphics[width=0.45\textwidth]{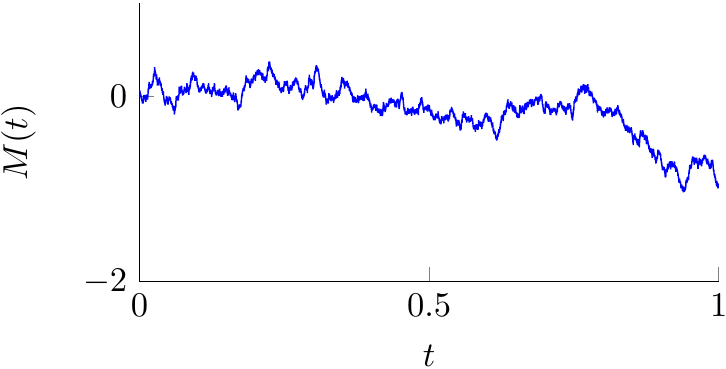}}~
	\sidesubfloat[]{\includegraphics[width=0.45\textwidth]{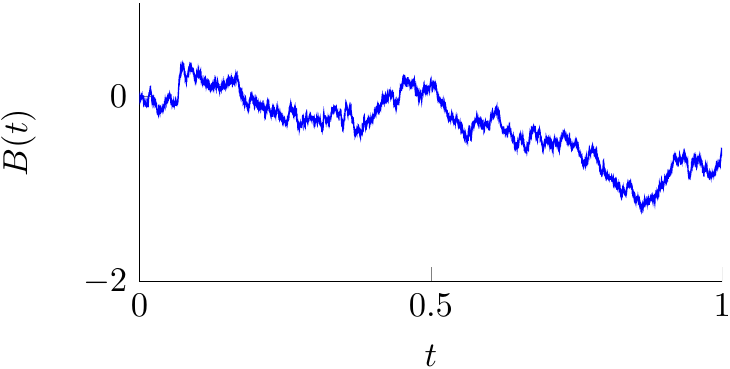}}\\
	\caption{Realisations of Poisson process approximations to Brownian motion. Plots (a)--(c) use the observed windows of $T=$10, 100, and 10,000, respectively. Plot (d) is a direct simulation of Brownian motion for comparison.}
	\label{bm_approx}
\end{figure}

\section{Simulation methods} \label{simulation}

Simulation is an increasingly indispensable tool in probability
modelling. Here we give details of three fundamental
approaches to producing realisations of \acp{HP}. \new

\subsection{Transformation methods}

For general point processes a simulation algorithm is suggested by the
converse of the random time change theorem (given in
\secn{transformation}). In essence, a unit rate Poisson process
$\{t^*_1, t^*_2, \dots\}$ is transformed by the inverse compensator
$\Lambda(\cdot)^{-1}$ into any general point process defined by that
compensator. The method, sometimes called the \emph{inverse
compensator method}, iteratively solves the equations
\[ t^*_1 = \int_0^{t_1} \cInt{s} \dif s, \quad t^*_{k+1} - t^*_k = \int_{t_k}^{t_{k+1}} \cInt{s} \dif s \]
for $\{t_1, t_2, \dots\}$, the desired point process (see \cite{giesecke2005} and Algorithm 7.4.III of \cite{daley2003a}). \new

For \acp{HP} the algorithm was first suggested by Ozaki
\cite{ozaki1979}, but did not state explicitly any relation to time
changes. It instead focused on \eq{generating_eq},
\[ \int_{t_k}^t \cInt{u} \dif u = - \log(1 - F^*(t)) \,, \]
which relates the conditional \ac{c.d.f.} of the next arrival to the previous history of arrivals $\{t_1, t_2, \dots, t_k\}$ and the specified $\cInt{t}$. This relation means the next arrival time $T_{k+1}$ can easily be generated by the inverse transform method, that is draw $U \sim \mathrm{Unif}[0,1]$ then $t_{k+1}$ is found by solving
\begin{equation} \label{to_solve}
	\int_{t_k}^{t_{k+1}} \cInt{u} \dif u = - \log(U) \,.
\end{equation}
For an exponentially decaying intensity the equation becomes
\[
  \log(U) + \bInt(t_{k+1} -t_k) - \frac{\alpha}{\beta}\Big(\sum_{i=1}^k \e^{\,\beta(t_{k-1}-t_i)} - \sum_{i=1}^k \e^{-\beta(t_k-t_i)} \Big) = 0 \,.
\]
Solving for $t_{k+1}$ can be achieved in linear time using the recursion of \eq{recursive}. However if a different excitation function is used then \eq{to_solve} must be solved numerically, for example using Newton's method \cite{ogata1981}, which entails a significant computational effort.

\subsection{Ogata's modified thinning algorithm}

\ac{HP} generation is a similar problem to inhomogeneous Poisson
process generation. The standard way to generate a inhomogeneous
Poisson process driven by intensity function $\lambda(\cdot)$
is via thinning. Formally the process is described by
\alg{pp_thinning} \cite{lewis1979}. The intuition is to generate a
`faster' homogeneous Poisson process, and remove points
probabilistically so that the remaining points satisfy the
time-varying intensity $\lambda(\cdot)$. The first process' rate $M$
cannot be less than $\lambda(\cdot)$ over $[0, T]$. \new

A similar approach can be used for the \ac{HP}, called \emph{Ogata's
modified thinning algorithm} \cite{ogata1981,liniger2009}. The
conditional intensity $\cInt{\cdot}$ does not have an \ac{a.s.}
asymptotic upper bound, however it is common for the intensity to be
non-increasing in periods without any arrivals. This implies that for
$t \in (T_i, T_{i+1}]$, $\cInt{t} \leq \cInt{T_i^+}$ (that is, the time just
after $T_i$, when that arrival has been registered). So the $M$ value
can be updated during each simulation. \alg{hawkes_thinning} describes
the process and \fig{thinning_ex} shows an example of each thinning
procedure. \new

\begin{algorithm}
\caption{Generate an inhomogeneous Poisson process by thinning.} \label{pp_thinning}
\begin{algorithmic}[1]
\Procedure{PoissonByThinning}{$T$, $\lambda(\cdot)$, $M$}
\State{\textbf{require}: $\lambda(\cdot) \leq M$ on $[0, T]$}
\State $P \gets []$, $t \gets 0$.
\While{$t < T$}
	\State $E \gets \mathrm{Exp}(M)$.
	\State $t \gets t + E$.
	\State $U \gets \mathrm{Unif}(0,M)$.
	\If {$t < T$ \textbf{and} $U \leq \lambda(t)$}
		\State $P \gets [P,\, t]$.
	\EndIf
\EndWhile
\State \Return $P$
\EndProcedure
\end{algorithmic}
\end{algorithm}

\begin{algorithm}
\caption{Generate a Hawkes process by thinning.} \label{hawkes_thinning}
\begin{algorithmic}[1]
\Procedure{HawkesByThinning}{$T$, $\cInt{\cdot}$}
\State \textbf{require}: $\cInt{\cdot}$ non-increasing in periods of no arrivals.
\State $\epsilon \gets 10^{-10}$ (some tiny value $> 0$).
\State $P \gets []$, $t \gets 0$.
\While{$t < T$}
	\State Find new upper bound:
  \State \hskip2em $M \gets \cInt{t+\epsilon}$.
	\State Generate next candidate point:
  \State \hskip2em $E \gets \mathrm{Exp}(M)$, $t \gets t + E$.
	\State Keep it with some probability:
  \State \hskip2em $U \gets \mathrm{Unif}(0,M)$.
	\If {$t < T$ \textbf{and} $U \leq \cInt{t}$}
		\State $P \gets [P,\, t]$.
	\EndIf
\EndWhile
\State \Return $P$
\EndProcedure
\end{algorithmic}
\end{algorithm}

\begin{figure}
  \centering
  \sidesubfloat[]{\includegraphics[width=0.8\textwidth]{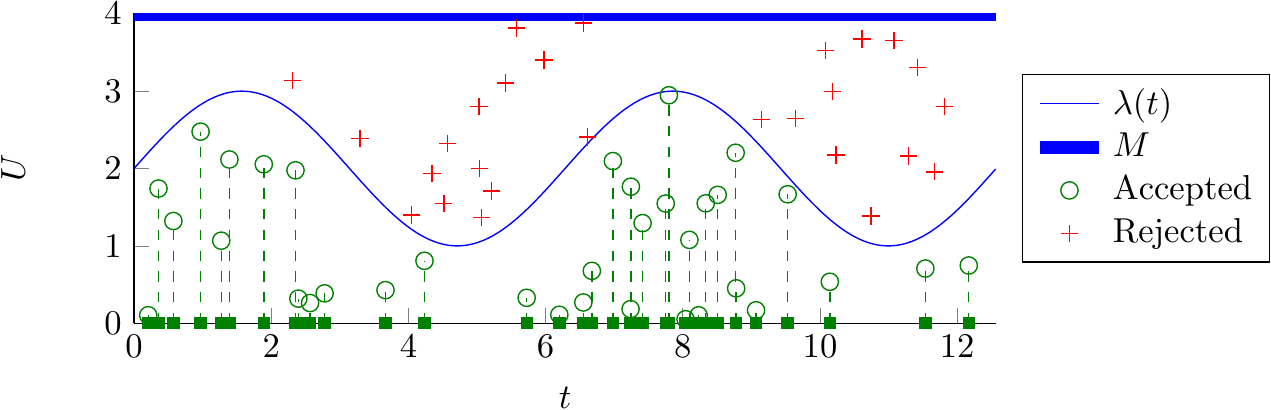}\label{pp_thinning_ex}} \\
  \sidesubfloat[]{\includegraphics[width=0.8\textwidth]{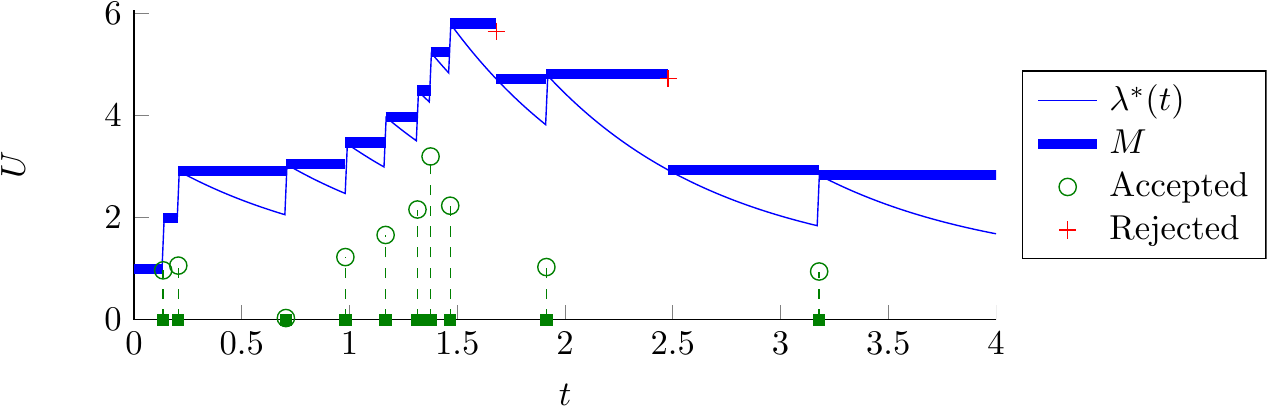}\label{hp_thinning_ex}\hspace{1pt}}
  \caption{Processes generated by thinning. (a) A Poisson process with intensity $\lambda(t) = 2 + \sin(t)$, bounded above by $M=4$. (b) A Hawkes process with $(\bInt, \alpha, \beta) = (1, 1, 1.1)$. Each $(t, U)$ point describes a suggested arrival at time $t$ whose $U$ value is given in \alg{pp_thinning} and \alg{hawkes_thinning}. Plus signs indicate rejected points, circles accepted, and green squares the resulting point processes.}
  \label{thinning_ex}
\end{figure}

\subsection{Superposition of Poisson processes}

The immigration--birth representation gives rise to a simple
simulation procedure: generate the immigrant arrivals, then generate
the descendants for each immigrant. \alg{hawkes_clusters} describes
the procedure in full, with \fig{hp_cluster} showing an example
realisation. \new

Immigrants form a homogeneous Poisson process of rate $\bInt$, so over
an interval $[0, T]$ the number of immigrants is Poi($\bInt T$)
distributed. Conditional on knowing that there are $k$ immigrants,
their arrival times $C_1, C_2, \dots, C_k$ are distributed as the
order statistics of \ac{i.i.d.} Unif[$0, T$] random variables. \new

Each immigrant's descendants form an inhomogeneous Poisson process.
The $i$th immigrant's descendants arrive with intensity
$\exite(t-C_i)$ for $t > C_i$. Denote $D_i$ to be the number of
descendants of immigrant $i$, then $\sE[D_i] = \int_0^\infty \exite(s)
\,\dif s = n$, and hence $D_i \iid \mathrm{Poi}(n)$. Say that the
descendants of the $i$th immigrant arrive at times $(C_i+E_1, C_i+E_2,
\dots, C_i+E_{D_i})$. Conditional on knowing $D_i$, the $E_j$ are
\ac{i.i.d.} random variables distributed with \ac{p.d.f.}
$\exite(\cdot)/n$. For exponentially decaying intensities, this
simplifies to $E_j \iid \mathrm{Exp}(\beta)$. \new

\begin{algorithm}[h]
\caption{Generate a Hawkes process by clusters.} \label{hawkes_clusters}
\begin{algorithmic}[1]
\Procedure{HawkesByClusters}{$T$, $\bInt$, $\alpha$, $\beta$}
\State $P \gets \{\}$.
\State Immigrants:
\State \hskip2em $k \gets \mathrm{Poi}(\bInt T)$
\State \hskip2em $C_1, C_2, \dots, C_k \getsIid \mathrm{Unif}(0, T)$.
\State Descendants:
\State \hskip2em $D_1, D_2, \dots, D_k \getsIid \mathrm{Poi}(\alpha/\beta)$.
\For{$i \gets 1$ \textbf{to} $k$}
	\If {$D_i > 0$}
		\State $E_1, E_2, \dots, E_{D_i} \getsIid \mathrm{Exp}(\beta)$.
		\State $P \gets P \cup \{C_i + E_1, \dots, C_i + E_{D_i}\}$.
	\EndIf
\EndFor
\State Remove descendants outside $[0, T]$:
\State \hskip2em $P \gets \{P_i : P_i \in P, P_i \leq T\}$.
\State Add in immigrants and sort:
\State \hskip2em $P \gets $ Sort($P \cup \{C_1, C_2, \dots, C_k\}$).
\State \Return $P$
\EndProcedure
\end{algorithmic}
\end{algorithm}

\begin{figure}
  \centering
  \sidesubfloat[]{\includegraphics[width=0.8\textwidth]{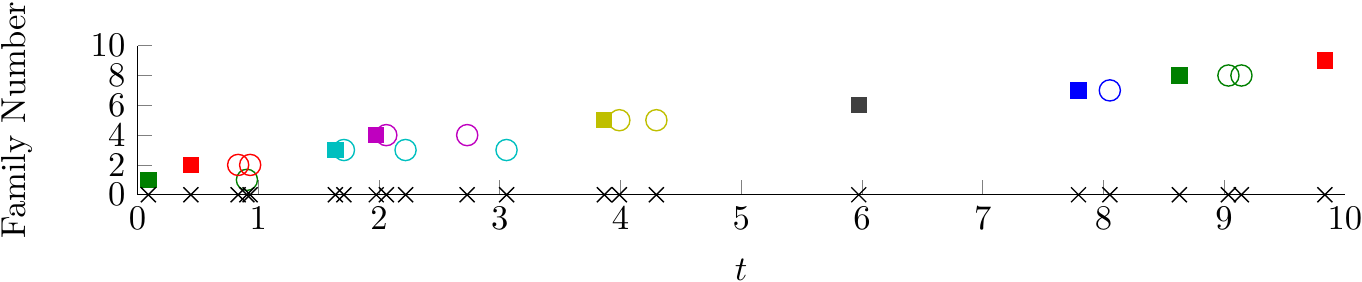}} \\
  \sidesubfloat[]{\includegraphics[width=0.8\textwidth]{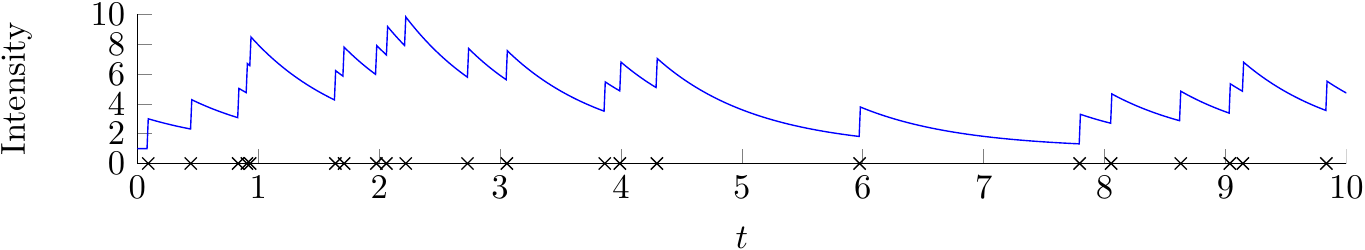}}
  \caption[A Hawkes process generated by clusters]{A Hawkes Poisson process generated by clusters. Plot (a) shows the points generated by the immigrant--birth representation; it can be seen as a sequence of vertically stacked `family trees'. The immigrant points are plotted as squares, following circles of the same height and color are its offspring. The intensity function, with $(\bInt,\alpha,\beta)=(1,2,1.2)$, is plotted in (b). The resulting Hawkes process arrivals are drawn as crosses on the axis.}
  \label{hp_cluster}
\end{figure}

\subsection{Other methods}

This section's contents are by no means a complete compilation of
simulation techniques available for \acp{HP}. Dassios and Zhao
\cite{dassios2013} and M{\o}ller and Rasmussen \cite{moller2005} give
alternatives to the methods listed above. Also not discussed is the
problem of simulating mutually-exciting \acp{HP}, however there are
many free software packages that provide this functionality.
\fig{mutual_example} shows an example realisation generated using the
R package `hawkes' (see also Roger D.\ Peng's related R package
`ptproc'). \new

\begin{figure}[H]
  \centering
  \sidesubfloat[]{\includegraphics[width=0.8\textwidth]{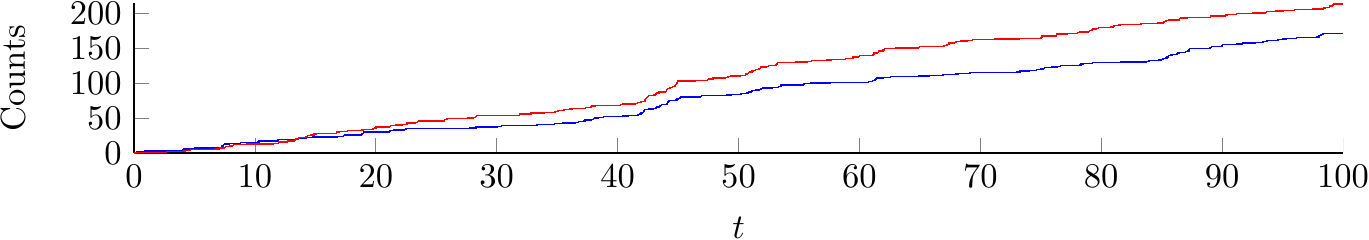}} \\
  \sidesubfloat[]{\includegraphics[width=0.8\textwidth]{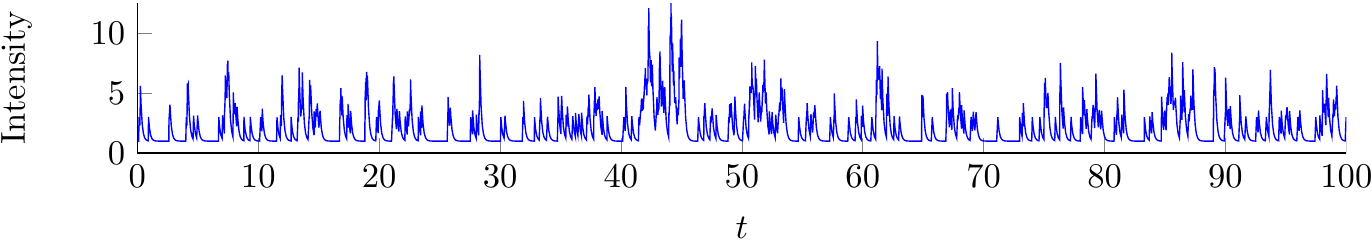}}
  \caption[An example of mutually exciting Hawkes processes]{A pair of mutually exciting Hawkes processes. (a) The two counting processes $N_1(t)$ and $N_2(t)$ with parameters: $\bInt_1=\bInt_2=1$, $\alpha_{1,1} = \alpha_{1,2} = \alpha_{2,1} = \alpha_{2,2} = 2$, $\beta_{1,1} = \beta_{1,2} = \beta_{2,1} = \beta_{2,2} = 8$. (b) The processes' realised intensitites (note that $\cInt[1]{t}=\cInt[2]{t}$ so only one is plotted).}
  \label{mutual_example}
\end{figure}

\section{Conclusion} \label{conclusion}

\acp{HP} are fundamentally fascinating models of reality. Many of the
standard probability models are Markovian and hence disregard the
history of the process. The \ac{HP} is structured around the premise
that the history matters, which partly explains why they appear in
such a broad range of applications. \new

If the exponentially decaying intensity can be utilised, then the
joint process $(N\fn, \cInt{\cdot})$ satisfies the Markov condition, and
both processes exhibit amazing analytical tractability. Explosion is
avoided by ensuring that $\alpha < \beta$. The covariance density is a
simple symmetric scaled exponential curve, and the power spectral
density is a shifted scaled Cauchy \ac{p.d.f.} The likelihood function
and the compensator are elegant, and efficient to calculate using
recursive structures. Exact simulation algorithms can generate this
type of \ac{HP} with optimal efficiency. Many aspects of the \ac{HP}
remain obtainable with any selection of excitation function; for
example, the random time change theorem completely solves the problem
of testing the goodness of a model's fit. \new

The use of \acp{HP} in finance appears itself to have been a
self-exciting process. A{\"\i}t-Sahalia et al.\ \cite{ait2010},
Filimonov and Sornette \cite{filimonov2012}, and Da Fonseca and
Zaatour \cite{dafonseca2014} formed the primary sources for the
financial part of \secn{lit_review}; these papers are surprisingly
recent (given the fact that the model was introduced in 1971) and are
representative of a current surge in \ac{HP} research. \new

\appendix

\section{Additional proof details} \label{math_appendix}

In this appendix, we collect additional detail elided from the proof of Theorem~\ref{spectral}.

\subsection{Supplementary to Theorem~\ref{spectral} (part one)} \label{part_i}

\begin{align*}
  R(\tau) &= \sE\left[ \frac{\dif N(t)}{\dif t} \left( \bInt + \int_{-\infty}^{t+\tau} \mu(t+\tau-s) \dif N(s) \right)\right] - \lm^2 \\
    &= \bInt \sE\left[\frac{\dif N(t)}{\dif t}\right] + \sE\left[\frac{\dif N(t)}{\dif t} \left(\int_{-\infty}^{t+\tau} \mu(t+\tau-s) \dif N(s) \right) \right] - \lm^2 \\
    &= \bInt \lm + \sE\left[\frac{\dif N(t)}{\dif t}\int_{-\infty}^{t+\tau} \mu(t+\tau-s) \dif N(s) \right] - \lm^2.
\end{align*}
Introduce a change of variable $v = s-t$ and multiply by $\frac{\dif v}{\dif v}$:
\[
  R(\tau) = \bInt \lm + \sE\left[\int_{-\infty}^{\tau} \mu(\tau-v) \frac{\dif N(t)}{\dif t} \frac{\dif N(t+v)}{\dif v} \dif v \right] - \lm^2
  = \bInt \lm + \int_{-\infty}^{\tau} \mu(\tau-v) \sE\left[ \frac{\dif N(t)}{\dif t} \frac{\dif N(t+v)}{\dif v}  \right] \dif v - \lm^2.
\]
The expectation is (a shifted) $R^{(c)}(v)$. Substitute that and \eq{complete} in:
\begin{align*}
  R(\tau) &= \bInt \lm + \int_{-\infty}^{\tau} \excite(\tau-v) \big(R^{(c)}(v) + \lm^2 \big)  \dif v - \lm^2 \\
    &= \bInt \lm + \int_{-\infty}^{\tau} \excite(\tau-v) \left(\lm \delta(v) + R(v)\right)  \dif v + \lm^2\int_{-\infty}^{\tau} \excite(\tau-v) \dif v - \lm^2 \\
    &= \bInt \lm + \lm \excite(\tau) + \int_{-\infty}^{\tau} \excite(\tau-v) R(v) \dif v + n\lm^2- \lm^2 \\
    &= \lm \excite(\tau) + \int_{-\infty}^{\tau} \excite(\tau-v) R(v) \dif v + \lm\,(\bInt -(1-n)\lm)\,.
\end{align*}
Using \eq{lm} yields
\[ \bInt -(1-n)\lm = \bInt -(1-n)\frac{\bInt}{1-n} = 0\,. \]
\[ \therefore\,\, R(\tau) = \lm \excite(\tau) + \int_{-\infty}^{\tau} \excite(\tau-v) R(v) \dif v \,. \]

\subsection{Supplementary to Theorem~\ref{spectral} (part two)} \label{part_ii}

Split the right-hand side of the equation into three functions $g_1, g_2$, and $g_3$:
\begin{equation} \label{another_to_trans}
  R(\tau) = \underbrace{\vphantom{\int_0^{\infty}} \lm \excite(\tau)}_{g_1(\tau)}+ \underbrace{\int_0^{\infty} \excite(\tau + v) R(v) \dif v}_{g_2(\tau)} + \underbrace{\int_0^{\tau} \excite(\tau - v) R(v) \dif v}_{g_3(\tau)} \,.
\end{equation}
Taking the Laplace transform of each term gives
\[
  \Laplace{g_1(\tau)}(s) = \int_0^s \e^{-s\tau} \lm \alpha \e^{-\beta \tau} \dif \tau= \frac{\alpha}{s+\beta} \lm\,,
\]
\begin{align*}
  \Laplace{g_2(\tau)}(s) &= \int_0^\infty \hspace{-5pt} \e^{-s\tau} \hspace{-4pt} \int_0^{\infty} \alpha \e^{-\beta(\tau + v)} R(v) \dif v \dif \tau \\
  &= \alpha \int_0^\infty \hspace{-5pt} \e^{-\beta  v} R(v) \int_0^{\infty} \e^{-\tau(s+\beta)} \dif \tau \dif v \\
  &= \frac{\alpha}{s + \beta} \int_0^\infty \e^{-\beta  v} R(v) \dif v \\
  &= \frac{\alpha}{s + \beta} \Laplace{R}(\beta) \,,
\end{align*}
and
\[
  \Laplace{g_3(\tau)}(s) = \Laplace{\excite(\tau)}(s) \Laplace{R(\tau)}(s)
  = \frac{\alpha}{s + \beta} \Laplace{R(\tau)}(s) \,.
\]
Therefore the Laplace transform of \eq{another_to_trans}
\begin{equation} \label{laplace}
    \Laplace{R(\tau)}(s) = \frac{\alpha}{s+\beta} \left( \lm  + \Laplace{R(\tau)}(\beta) + \Laplace{R(\tau)}(s) \right) \,.
\end{equation}
Substituting $s=\beta$ and rearranging gives that
\begin{equation} \label{beta}
    \Laplace{R(\tau)}(\beta) = \frac{\alpha \lm}{2(\beta - \alpha)} \,.
\end{equation}
So substituting the value of $\Laplace{R(\tau)}(\beta)$ into \eq{laplace} means
\[
  \Laplace{R(\tau)}(s) = \frac{\alpha}{s+\beta} \Big(\lm + \frac{\alpha \lm}{2(\beta - \alpha)} + \Laplace{R(\tau)}(s) \Big)
\]
\[ \Rightarrow \Laplace{R(\tau)}(s) = \frac{ \frac{\alpha}{s+\beta} \left( \lm  + \frac{\alpha \lm}{2(\beta - \alpha)} \right) }{1 - \frac{\alpha}{s+\beta}} = \frac{\alpha \lm ( 2\beta - \alpha)}{2(\beta - \alpha)(s + \beta - \alpha)} \,. \]

\bibliographystyle{spphys}
\bibliography{main}

\end{document}